\theoremstyle{thmstyleone}%
\newtheorem{thm}{Theorem}[section]%  meant for continuous numbers
\newtheorem{prop}[thm]{Proposition}% 
\newtheorem{lem}[thm]{Lemma}
\newtheorem{coro}[thm]{Corollary}
\theoremstyle{thmstyletwo}%
\newtheorem{ex}{Example}%
\newtheorem{remark}{Remark}%
\theoremstyle{thmstylethree}%
\newtheorem{defn}{Definition}[section]%
\begin{document}

\title[Zeta functions of graphs of groups]{Zeta functions of geometrically finite graphs of groups}

%%=============================================================%%
%% Prefix	-> \pfx{Dr}
%% GivenName	-> \fnm{Joergen W.}
%% Particle	-> \spfx{van der} -> surname prefix
%% FamilyName	-> \sur{Ploeg}
%% Suffix	-> \sfx{IV}
%% NatureName	-> \tanm{Poet Laureate} -> Title after name
%% Degrees	-> \dgr{MSc, PhD}
%% \author*[1,2]{\pfx{Dr} \fnm{Joergen W.} \spfx{van der} \sur{Ploeg} \sfx{IV} \tanm{Poet Laureate} 
%%                 \dgr{MSc, PhD}}\email{iauthor@gmail.com}
%%=============================================================%%

\author[1]{\fnm{Soonki} \sur{Hong}}\email{soonkihong@postech.ac.kr}

\author*[2]{\fnm{Sanghoon} \sur{Kwon}}\email{skwon@cku.ac.kr}
%\equalcont{These authors contributed equally to this work.}

%\author[1,2]{\fnm{Third} \sur{Author}}\email{iiiauthor@gmail.com}
%\equalcont{These authors contributed equally to this work.}

\affil[1]{\orgdiv{Department of Mathematics}, \orgname{Postech}, \orgaddress{\street{Cheongam-ro}, \city{Pohang}, \postcode{37673}, \state{Kyungbuk}, \country{Korea}}}

\affil*[2]{\orgdiv{Department of Mathematical Education}, \orgname{Catholic Kwandong University}, \orgaddress{\street{Beomil-ro}, \city{Gangneung}, \postcode{25601}, \state{Gangwon}, \country{Korea}}}

%\affil[3]{\orgdiv{Department}, \orgname{Organization}, \orgaddress{\street{Street}, \city{City}, \postcode{610101}, \state{State}, \country{Country}}}

%%==================================%%
%% sample for unstructured abstract %%
%%==================================%%

\abstract{In this paper, we explore the properties of zeta functions associated with infinite graphs of groups that arise as quotients of cuspidal tree-lattices, including all non-uniform arithmetic quotients of the tree of rank one Lie groups over local fields. Through various examples, we illustrate pairs of non-isomorphic cuspidal tree-lattices with the same Ihara zeta function. Additionally, we analyze the spectral behavior of a sequence of graphs of groups whose pole-free regions of zeta functions converge towards 0, which also presents an example of arbitrary small exponential error-term in counting geodesic formula.}

\keywords{graphs of groups, Bass-Ihara zeta function, tree-lattices}

%%\pacs[JEL Classification]{D8, H51}

\pacs[MSC Classification]{Primary 05C63; Secondary 20E08$\!$, 11R59}

\maketitle

%\newpage
\section{Introduction}\label{sec:1}

Given a cocompact subgroup $\Gamma$ of $PSL(2,\mathbb{R})$, Selberg introduced a zeta function $Z_\Gamma(s)$ and proved many important properties which resemble those of usual $L$-functions in number theory, such as Euler product, functional equation, and the analogue of Riemann hypothesis (\cite{Se}). This function, now called with Selberg's zeta function, is generalized to any discrete subgroup $\Gamma$ of semi-simple Lie group of $\mathbb{R}$-rank one, even for non-uniform lattice $\Gamma$ (\cite{Gu}) or geometrically finite subgroups of $PSL(2,\mathbb{R})$ with infinite area (\cite{Bo}).

Meanwhile, inspired by Selberg’s zeta function, Ihara \cite{Ih} initially introduced a zeta function that counts prime elements in discrete subgroups of rank one $p$-adic groups. It can be viewed as a geometric zeta function associated with the corresponding finite graph, which is a quotient of the Bruhat-Tits tree for the $p$-adic groups (\cite{Ser}). Over time, it has been generalized by Sunada \cite{Su}, Hashimoto \cite{Ha}, and Bass \cite{Ba2}, revealing connections to number theory and representation theory. This zeta function is defined as the product
\[Z_X(u)=\prod_p\frac{1}{1-u^{\ell(p)}}\]
where $p$ runs through the set of prime cycles in a finite graph $X$. This product, being infinite in general, converges to a rational function and satisfies the \emph{Ihara determinant formula}
\[Z_X(u)=\frac{(1-u^2)^{\chi(X)}}{\det(1-uA+u^2Q)}\]
where $A$ is the adjacency operator, $Q+1$ is the valency operator, and $\chi(X)$ is the Euler characteristic of the graph $X$. One of the most remarkable features of this formula is that if $X=\Gamma\backslash\mathcal{T}$ for some Bruhat-Tits tree $\mathcal{T}$ of a $p$-adic group $G$ and a cocompact arithmetic subgroup $\Gamma$ of $G$, then the right hand side of the formula equals the non-trivial part of the Hasse-Weil zeta function of the Shimura curve attached to $\Gamma$.

In recent years, there has been interest in generalizing these zeta functions to infinite graphs. For example, \cite{CMS} and \cite{Cl} utilizes a finite trace argument on a group von Neumann algebra and define the zeta function as a determinant. Another approach, described in \cite{GZ}, involves approximating an infinite graph by finite ones and defining the zeta function as a suitable limit. In \cite{CJK}, the Ihara zeta function is extended to infinite graphs by considering cycles that pass through a specific point using Heat kernel, rather than considering all cycles. Furthermore, in \cite{LPS}, the zeta function is extended to the case of an infinite graph equipped with an invariant measure and a groupoid action, which is referred to as a measure graph. Meanwhile, \cite{DK} take another approach by considering cycles obtained from geodesics in the universal covering tree of infinite weigted graphs. Especially, they defined the Bass-Ihara zeta function for a weighted graph $(X,w)$ by
\[Z_{(X,w)}(u)=\prod_{[c]}\frac{1}{1-w(c)u^{\ell(c)}}.\]
Here, $c$ runs through the set of prime cycles in a graph $X$. The precise definition of $w(c)$ is given in Section~\ref{sec:3}. It is also proved in \cite{DK} that it converges to a rational function when $\Gamma$ is a cuspidal tree lattice.

%The theory of Ihara zeta functions for finite graphs has been extended to non-compact cuspidal tree-lattices in \cite{DK} with a geometric method. Meanwhile, when $\Gamma$ is a principal congruence subgroup of $PGL(2,\mathbb{F}_q[t])$, the Selberg zeta function attached to $\Gamma$ and its determinant expression have been achieved in \cite{Na} using a purely algebraic approach.

Our goal in this paper is to extend the idea of \cite{DK} to develop a comprehensive definition of the zeta function for a graph of groups $(X,\mathcal{G})$. Graphs of groups naturally arise in the study of group actions on trees and their covering theory. The key distinction between the zeta function of weighted graphs and graphs of groups lies in the additional information regarding the centralizers of hyperbolic elements. 

Of particular interest is the case where the graph of groups $(X,\mathcal{G})$ is obtained from a geometrically finite (or equivalently, cuspidal) tree lattice. This encompasses every quotient of the tree of rank-one Lie groups over non-archimedean local fields by its discrete subgroup (as proved in \cite{Lu}). Specifically, we define the zeta function $Z_{(X,\mathcal{G})}(u)$ of $(q+1)$-regular graphs of groups $(X,\mathcal{G})$ that resembles the Selberg zeta function, and investigate its relationship with the Bass-Ihara zeta function $Z_{(X,w)}(u)$ for the weighted graph $(X,w)$ associated with $(X,\mathcal{G})$ under \emph{centrally rigid} condition. (We refer to Definition~\ref{def:cr} for precise details.) It is worth mentioning that the Selberg zeta function and its determinant expression for principal congruence subgroups of $PGL(2,\mathbb{F}_q[t])$ have been also achieved in \cite{Na} with algebraic approach. We build upon these algebraic achievements and provide a geometric perspective by examining zeta functions within the framework of graph of groups.

We define a Selberg zeta function of a graph of groups $(X,\mathcal{G})$ by
\[Z_{(X,\mathcal{G})}(u)=\prod_{[\gamma]\in \textrm{P}(\pi_1(X,\mathcal{G}))} \frac{1}{1-u^{\ell(\gamma)}}\]
where $[\gamma]$ runs over conjugacy classes of primitive hyperbolic elements $\{\gamma\}$ of $\pi_1(X,\mathcal{G})$. Precise definition for a graph of groups $(X,\mathcal{G})$ and its fundamental group $\pi_1(X,\mathcal{G})$ are given in Section~\ref{sec:2}.
If $\Gamma=\pi_1(X,\mathcal{G})$ is a subgroup of $PGL(2,\mathbb{F}_q[t])$, then the Selberg zeta function (of Ruelle type) for $\Gamma<PGL(2,\mathbb{F}_q[t])$ is given by
\[\zeta_\Gamma{(s)}=\prod_{[\gamma]\in\textrm{P}(\Gamma)}\frac{1}{1-N(\gamma)^{-s}}\] where $P(\Gamma)$ is the set of all the primitive hyperbolic conjugacy classes of $\Gamma$ and $N(\gamma)=q^{2\deg(\textrm{tr}\gamma)}$.
Then, it holds that
\[\zeta_\Gamma(s)=Z_{(X,\mathcal{G})}(q^{-s}).\]

Meanwhile, if $(X,w)$ is the weighted graph associated to $(X,\mathcal{G})$, then following the idea of \cite{DK}, we have the Bass-Ihara zeta function of $(X,w)$ given by
\[Z_{(X,w)}(u)=\prod_{[c]}\frac{1}{1-w(c)u^{\ell(c)}}\]
where $w(c)$ is the natural weight counting the number of inequivalent pre-images of $c$ in the universal covering tree $\mathcal{T}$ of $(X,w)$.
The precise definition of $w(c)$ is given in Section~\ref{sec:3}. 

Let $\textrm{Aut}(\mathcal{T}_{q+1})$ be a group of automorphisms of $(q+1)$-regular tree. Let us call a subgroup $\Gamma$ of $\textrm{Aut}(\mathcal{T}_{q+1})$ \emph{centrally rigid} if the cardinality $|\textrm{Cent}(\gamma,\Gamma_{\textrm{Axis}(\gamma)})|$ of the centralizer of $\gamma$ in $\Gamma_{\textrm{Axis}(\gamma)})$ is constant on every $\gamma\in P(\Gamma)$. When $\Gamma$ is centrally rigid, we denote by $c_\Gamma=|\textrm{Cent}(\gamma,\Gamma_{\textrm{Axis}(\gamma)})|$. For example, $c_{PGL(2,\mathbb{F}_q[t])}=q-1$.
The following theorem describes the relation between two zeta functions for centrally rigid subgroups.

\bigskip
\begin{thm}
Let $\mathbf{X}=(X,\mathcal{G})$ be a $(q+1)$-regular graph of groups arising as a quotient by a centrally rigid subgroup $\Gamma$ of $PGL(2,\mathbb{F}_q[t])$ and $\mathbf{A}=(X,w)$ be the associated weighted graph. Then, we have
\[Z_\mathbf{X}(u)=Z_\mathbf{A}(u)^{c_\Gamma}.\]
\end{thm}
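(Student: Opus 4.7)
The plan is to compare the two Euler products by taking logarithms, expanding, and matching coefficients grouped by prime cycles in $X$. Expanding $-\log(1-u^{\ell(\gamma)})=\sum_{m\ge 1}u^{m\ell(\gamma)}/m$ on the Selberg side and regrouping according to the cycle $d$ in $X$ that the axis of $\gamma^m$ in the Bass--Serre tree $\mathcal{T}$ projects onto, and doing the analogous expansion on the Bass--Ihara side (where the natural extension of $w$ to arbitrary cycles by multiplicativity gives $w(c^n)=w(c)^n$), the theorem reduces, for each prime cycle $[c]$ in $X$ and each $n\ge 1$, to the combinatorial identity
\begin{equation}
\sum_{k\mid n} k\, M(c^k) \;=\; c_\Gamma\, w(c)^n, \label{eq:planId}
\end{equation}
where $M(d)$ denotes the number of primitive hyperbolic conjugacy classes in $\pi_1(X,\mathcal{G})$ whose axes in $\mathcal{T}$ project precisely onto the cycle $d$.

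The next step is to interpret lifts of $c^n$ in $\mathcal{T}$ as length-$n$ words over an alphabet of $w(c)$ letters, one letter per $\Gamma$-inequivalent lift of the prime cycle $c$. The setwise stabilizer $\Gamma_L$ of an axis $L$ above $c^n$ contains a primitive hyperbolic translation that induces a cyclic shift of period one on the associated length-$n$ word, so $\Gamma$-equivalence classes of axes above $c^n$ biject with cyclic equivalence classes of such words; the aperiodic (primitive) such classes are counted by the classical necklace formula $\frac{1}{n}\sum_{k\mid n}\mu(n/k)w(c)^k$.

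The factor $c_\Gamma$ in \eqref{eq:planId} is supplied by the centrally rigid hypothesis. Over each axis $L$ above $c^n$, primitive hyperbolic elements of $\Gamma$ with axis $L$ and a fixed translation direction form a single coset of the pointwise fixator $K_L$, on which $\Gamma_L$-conjugation partitions into orbits of uniform cardinality $c_\Gamma=|\textrm{Cent}(\gamma,\Gamma_L)|$ (uniform precisely because $\Gamma$ is centrally rigid). Consequently each aperiodic cyclic class of lifts contributes exactly $c_\Gamma$ primitive conjugacy classes to $M(c^n)$, giving $M(c^n)=c_\Gamma\cdot\frac{1}{n}\sum_{k\mid n}\mu(n/k)w(c)^k$; this is the M\"obius inverse of \eqref{eq:planId}, which therefore holds, and summing the contributions cycle by cycle yields the theorem.

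The main obstacle is setting up the correspondence in the second paragraph rigorously: one must verify, via the extension structure of $\Gamma_L$ by $K_L$ and a cyclic translation group generated by a primitive hyperbolic element, that $\Gamma$-equivalence on lifts of $c^n$ restricts to cyclic shifting of the associated word-decomposition, and that primitivity of an element of $\pi_1(X,\mathcal{G})$ matches aperiodicity of the associated word. Once this identification is in hand, the centrally rigid count directly yields the necklace-style identity, and matching Euler-factor contributions prime cycle by prime cycle completes the proof.
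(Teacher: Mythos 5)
Your strategy is sound and would prove the theorem, but it is a genuinely different route from the paper's. The paper's proof is essentially a two-line manipulation: it invokes the identity $N_m=\sum_{\ell(c)=m}w(c)\ell(c_0)=\sum_{\ell(\gamma)=m}\ell(\gamma_0)/|\textrm{Cent}(\gamma,\Gamma_{\textrm{Axis}(\gamma)})|$ of Equation~(\ref{eqn:Nm}), which it takes from the proof of Proposition~8.4 of \cite{DK}, and then the centrally rigid hypothesis lets it pull the constant $\frac{1}{c_\Gamma}$ out of the exponent in $Z_\mathbf{A}(u)=\exp\bigl(\sum_m N_m\frac{u^m}{m}\bigr)$, giving $Z_\mathbf{A}=Z_\mathbf{X}^{1/c_\Gamma}$ immediately. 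You instead group the logarithms by prime cycles of $X$ and reduce to the per-prime identity $\sum_{k\mid n}kM(c^k)=c_\Gamma\,w(c)^n$, which you then establish by a necklace count plus M\"obius inversion. That identity is exactly Equation~(\ref{eqn:Nm}) restricted to the powers of a single prime cycle $c$, so the two arguments are logically equivalent; the difference is that you re-derive the geometric input (that $w$ counts $\Gamma$-inequivalent lifts and that each $\Gamma$-class of axes over $c^n$ carries exactly $c_\Gamma$ primitive conjugacy classes) which the paper outsources to \cite{DK}. Your route buys self-containedness and a finer, Euler-factor-by-Euler-factor statement; its cost is that the lift--word correspondence you correctly flag as the main obstacle is precisely the nontrivial content of \cite{DK}, Proposition~8.4, and would have to be written out in full for the proof to be complete.

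Two points to repair in the write-up. First, the sentence asserting that $\Gamma_L$-conjugation partitions the coset $K_L\gamma_0$ ``into orbits of uniform cardinality $c_\Gamma$'' is not what you use: by orbit--stabilizer the orbits have uniform cardinality $|K_L|/c_\Gamma$ (uniform precisely because of central rigidity), and it is the \emph{number} of orbits that equals $c_\Gamma$ --- which is the count your conclusion actually requires, so this is a slip of the pen rather than a gap, but as written the sentence is internally inconsistent. Second, you should note that a possible orientation-reversing element of the setwise stabilizer $\Gamma_L$ conjugates elements translating in one direction to elements translating in the other, i.e.\ it relates classes over $c^n$ to classes over the reversed cycle $\overline{c}^{\,n}$; since cycles in $X$ are oriented and equivalence is only up to cyclic rotation, these live in different Euler factors and no double counting occurs, but the bookkeeping deserves a line.
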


\bigskip

%Meanwhile, the authors in \cite{DK} showed that if $\Gamma$ is a cuspidal tree-lattice, which is also called geometrically finite lattice in some literature (see Section~\ref{sec:3} for the definition), of the uniform tree $\mathcal{T}$, then the zeta function of the weighted graph associated to $\Gamma\backslash\mathcal{T}$ converges to a rational function. We investigate this approach in more detail with several examples. 

%Given an action of a group $\Gamma$ on a graph $\mathcal{T}$ (not necessarily a tree), there is a natural object $\mathbf{X}$ called a quotient graph of groups. See Section~\ref{sec:2} for the detail. In the definition of the zeta function, we should consider cycles coming from geodesics in the universal covering and that makes to have a weight in the Euler product as follows.

%In the following theorem, we made a relation between the number of zeros of $Z_\mathbf{X}(u)$ and the number of cuspidal rays in $\mathbf{X}$. We give a proof of the theorem in Section~\ref{sec:4}. 

%\begin{thm}
%Let $\mathbf{X}=(X,\mathcal{G})$ be a graph of groups associated to a cuspidal lattice $\Gamma$. Then the number of zeros (with multiplicity) of the the Ihara zeta function $Z_\mathbf{X}(u)$ of $\mathbf{X}$ is twice of the number of cuspidal rays in $\mathbf{X}$.
%\end{thm}

In Section~\ref{sec:5}, we compute the Bass-Ihara zeta function for some examples. In particular, we prove the following proposition.

\bigskip

\begin{prop}\label{prop:non-isom}
For every $q\ge 3$, there exist pairs of $(q+1)$-regular graphs of groups whose induced edge-indexed graphs are non-isomorphic but the zeta functions are identical.
\end{prop}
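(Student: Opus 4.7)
The plan is to exhibit, for each $q\ge 3$, an explicit pair of $(q+1)$-regular graphs of groups on small underlying graphs whose edge-indexings are non-isomorphic, and to verify that their Bass--Ihara zeta functions agree by direct application of the determinant formula of Section~\ref{sec:3}. Since the reciprocal of $Z_{(X,w)}(u)$ is, up to the cyclotomic factor $(1-u^2)^{\chi(X)}$, the characteristic polynomial of a matrix built from the weights $w(e)=i(\bar e)$, the zeta functions coincide precisely when these characteristic polynomials agree, so the task reduces to engineering weight patterns that produce the same small-matrix spectrum.

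My approach is to take $X$ to be a simple combinatorial skeleton --- for instance two vertices joined by several parallel edges, or a single vertex with a bouquet of loops --- so that the weighted adjacency operator is $1\times 1$ or $2\times 2$ and its characteristic polynomial depends only on low-order symmetric functions of the products $i(e)i(\bar e)$. I would then write down two index functions $i_1,i_2\colon EX\to\mathbb{Z}_{>0}$ both satisfying the regularity constraint $\sum_{o(e)=v} i(e)=q+1$ at each vertex, producing the same multiset $\{i(e)i(\bar e)\}$ of edge-products, but distributing the ordered pairs $(i(e),i(\bar e))$ across the edges in ways that are not related by any graph automorphism of $X$. The hypothesis $q\ge 3$ enters here: the number of unordered pairs $\{a,b\}$ of positive integers with $a+b=q+1$ is $\lceil q/2\rceil$, and for $q\ge 3$ this is large enough to construct two distinct assignments realizing the same product multiset, whereas $q=2$ forces the assignment to be essentially unique.

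Having produced candidate pairs, I would certify non-isomorphism of the edge-indexed graphs by comparing the incidence multisets of index pairs at each vertex, and verify realizability as graphs of groups via the Bass--Kulkarni criterion for quotients of regular trees. The main obstacle, I expect, is the simultaneous control of the non-isomorphism certificate and of the full characteristic polynomial: matching only $\sum_e i(e)i(\bar e)$ is insufficient when $X$ carries cycles of length $\ge 3$, so cases with non-trivial closed walks may require passing to a slightly larger skeleton or enforcing an extra symmetry in the ansatz to force all of the higher-order symmetric functions in the weights that appear in the determinant expansion to coincide.
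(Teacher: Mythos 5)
Your proposal correctly identifies the shape of the problem --- exhibit two non-isomorphic edge-indexed $(q+1)$-regular structures whose determinants $\det(I-uT)$ agree --- but it remains a search strategy rather than a proof: no concrete pair is produced, and the reduction it rests on is not valid as stated. Two specific problems. First, for a bouquet of loops at a single vertex $v$ the two indices of each loop are forced to coincide, since $i(e)=[G_v:\phi_e(G_e)]=|G_v|/|G_e|=[G_v:\phi_{\bar e}(G_e)]=i(\bar e)$; there are no ordered pairs $(i(e),i(\bar e))$ to redistribute there. Second, for two vertices joined by parallel edges $e_1,\dots,e_m$ the coefficients $N_m$ of $\log Z$ are not functions of the multiset of products $i(e_j)i(\bar e_j)$ alone: already the $2$-cycles $(e_i,\bar e_j)$ with $i\ne j$ contribute cross terms of the form $\sum_{i\ne j} i(\bar e_j)\,i(e_i)$, and the backtracking classes contribute $(w(e_i)-1)(w(\bar e_i)-1)$. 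Whether all higher coefficients collapse, under the regularity constraints, to invariants shared by your two index functions is exactly the verification the proposition requires, and you explicitly defer it ("may require passing to a slightly larger skeleton\dots"). As written, the argument has a genuine gap: the existence claim is never discharged.

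For comparison, the paper proves the proposition with infinite (cuspidal) examples rather than finite ones. It considers the star-shaped weighted graphs $\mathbf{A}^{(n)}(\alpha_1,\dots,\alpha_n)$ consisting of $n$ rays glued at a base vertex, with base indices $\alpha_1,\dots,\alpha_n$ satisfying $\alpha_1+\cdots+\alpha_n=k\le q+1$. Proposition~\ref{prop:ex2} computes $\det(I-uT)$ explicitly as a limit of finite determinants (via column operations in the style of \cite{DK}) and finds $Z_{\mathbf{A}}(u)=(1-qu^2)^n\big/\bigl((1-u)^{n-1}(1+u)^{n-1}(1-(kq-k+1)u^2)\bigr)$, which depends only on the number of rays $n$ and the sum $k$. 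Two choices of $(\alpha_1,\alpha_2)$ with the same sum but different multisets --- available exactly when $k\ge 4$, i.e.\ when $q\ge 3$ --- then give non-isomorphic edge-indexed graphs with identical zeta functions. If you want to salvage the finite-graph route, you must fix a specific skeleton, write out $\det(I-uT)$ in full, and exhibit the two index functions together with the verification that every coefficient matches; until then the proposal does not establish the statement.
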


\bigskip

Additionally, we explore a family of graphs of groups introduced in \cite{Ef} which discussed the spectral behavior of automorphic functions under deformations of graphs of groups. We note that both the poles of zeta functions and the spectrum of the combinatorial Laplacian on graphs of groups are related to the distribution of primes. If $R_\mathbf{X}$ is the radius of convergence of $Z_\mathbf{X}(u)$, then there is $\epsilon>0$ such that any pole of $Z_\mathbf{X}(u)$ in the region $R_\mathbf{X}\le |u|\le R_\mathbf{X}+\epsilon$ must lie on the circle $|u|=R_\mathbf{X}$. We call the region $|u|<R_\mathbf{X}+\epsilon$ by \emph{pole-free region} of $\mathbf{X}$. Especially, the pole-free region of the zeta function $Z_\mathbf{X}(u)$ is connected with the error-term of the prime geodesics theorem in graphs of groups.

By investigating the poles of zeta functions, we establish the following theorem. The proof will be given in Section~\ref{sec:6}.

\bigskip

\begin{thm}\label{thm:pole-free}
There is a sequence of $(q+1)$-graphs of groups $\mathbf{X}_N=(X_N,\mathcal{G}_N)$ such that the pole-free regions of $Z_{\mathbf{X}_N}$ converges to $|u|\le \frac{1}{q}$.
\end{thm}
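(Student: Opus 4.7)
The plan is to build $\mathbf{X}_N$ as a sequence of deformations, in the spirit of \cite{Ef}, of a fixed cuspidal $(q+1)$-regular graph of groups, and to translate the resulting spectral behavior of the associated adjacency operator into the location of poles of $Z_{\mathbf{X}_N}$ via the graph-of-groups Ihara determinant formula developed in the earlier sections. The underlying slogan is that the deformation must push exceptional eigenvalues of the adjacency operator towards $\pm(q+1)$, which by the Bass substitution $\lambda=u^{-1}+qu$ forces the corresponding poles of $Z_{\mathbf{X}_N}$ towards the circle $|u|=1/q$.

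First, I would fix a base cuspidal $(q+1)$-regular graph of groups carrying at least one cuspidal ray and define $\mathbf{X}_N$ by altering the edge- or vertex-groups at depth $N$ along that ray --- for example by grafting at depth $N$ a bounded ``perturbation piece'' whose combinatorial volume grows with $N$, exactly as in the deformation construction of \cite{Ef}. Each $\mathbf{X}_N$ remains a cuspidal $(q+1)$-regular graph of groups, so the machinery built in Sections~\ref{sec:3}--\ref{sec:5} applies and yields a meromorphic continuation of $Z_{\mathbf{X}_N}(u)$ together with a regularized determinantal expression of the form
\begin{equation*}
Z_{\mathbf{X}_N}(u)^{-1} \;=\; (1-u^2)^{-\chi(\mathbf{X}_N)}\det\bigl(1-uA_N+qu^2\bigr),
\end{equation*}
where $A_N$ is an adjacency-type operator on automorphic functions on $\mathbf{X}_N$. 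Consequently, the poles of $Z_{\mathbf{X}_N}$ outside the Ramanujan circle $|u|=1/\sqrt{q}$ are determined by the exceptional eigenvalues of $A_N$ via $\lambda=u^{-1}+qu$, and the limiting case $\lambda=q+1$ corresponds precisely to $u=1/q$.

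Second, I would show that the family can be tuned so that $A_N$ admits an eigenvalue $\lambda_N\to q+1$ as $N\to\infty$. The cleanest route is a Cheeger- or Rayleigh-quotient-type argument: construct cuspidal test functions $\varphi_N$ supported deep inside the perturbed region of $\mathbf{X}_N$ whose boundary-to-volume ratio (in the sense appropriate for the graph of groups) tends to $0$, and verify $\langle A_N\varphi_N,\varphi_N\rangle/\langle\varphi_N,\varphi_N\rangle\to q+1$. Alternatively one appeals directly to the deformation-of-spectrum results in \cite{Ef}. Either way, each such $\lambda_N$ produces a pole of $Z_{\mathbf{X}_N}$ at a point $u_N$ with $|u_N|\to 1/q$ from outside, forcing the buffer $\epsilon_N$ appearing in the definition of the pole-free region to shrink to $0$; hence the pole-free regions converge to the closed disk $|u|\le 1/q$.

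The main obstacle will be controlling the provenance of these approximate eigenvalues: one must check that $\lambda_N$ contributes an honest meromorphic pole of $Z_{\mathbf{X}_N}$ rather than being absorbed into the continuous Eisenstein spectrum produced by the cusps, and that the associated factor in the determinant formula is not cancelled by a zero of the $(1-u^2)^{-\chi}$ factor or of the rest of the numerator. This should reduce to a cuspidality/orthogonality argument adapted to the graph-of-groups setting, using the determinant formula derivation of the earlier sections to isolate a genuinely discrete ``small eigenvalue'' of $A_N$ and to identify the surviving pole it produces in $Z_{\mathbf{X}_N}$.
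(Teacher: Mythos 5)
Your proposal is a strategy outline rather than a proof, and the two steps it leans on are precisely the ones that are not available in this paper's framework. First, the determinant formula you invoke, $Z_{\mathbf{X}_N}(u)^{-1}=(1-u^2)^{-\chi}\det(1-uA_N+qu^2)$, is the three-term Ihara formula for \emph{finite} graphs; for infinite cuspidal graphs of groups the paper only has the edge-operator identity $Z_{\mathbf{A}}(u)=1/\det(I-uT)$ (via the exhaustion limit of \cite{DK}), and no regularized vertex-adjacency determinant or Euler characteristic is defined here. So the substitution $\lambda=u^{-1}+qu$ linking ``exceptional eigenvalues of $A_N$'' to poles is not justified in this setting. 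Second, even granting such a formula, your Rayleigh-quotient argument only shows that the spectrum of $A_N$ approaches $q+1$; you yourself flag that these approximate eigenvalues might sit in the continuous (Eisenstein-type) spectrum coming from the cusp and produce no pole at all, and you offer no mechanism to resolve this. That is not a technical loose end — it is the entire content of the theorem, since the claim is about actual poles of $Z_{\mathbf{X}_N}$. A further unaddressed point: to conclude that the pole-free regions converge to $|u|\le 1/q$ you must also control \emph{all} poles near the critical circle and verify that the radius of convergence stays at $1/q$, which an existence argument for one sequence of eigenvalues does not give. Finally, no concrete family $\mathbf{X}_N$ is ever constructed; ``grafting a perturbation piece whose volume grows with $N$'' is not a definition one can compute with.

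The paper sidesteps all of this by being completely explicit: $X_N$ is a cycle of length $2N$ with vertex groups $H_k\simeq C_{q-1}\times C_q^k$ growing around the loop, attached to a single cuspidal ray with groups $G_k\simeq C_q^k$. One writes down the edge operator $T$, computes $\det(I-uT)$ by explicit row and column operations (reducing the infinite ray to a $3\times 3$ or $4\times 4$ block as in \cite{DK}), and obtains a closed-form factorization of $Z_{\mathbf{X}_N}(u)^{-1}$. The nontrivial factor $1+(q-1)\sum_{k=0}^{N}q^ku^{2k+1}-q^Nu^{2N+1}$ visibly has a real root whose modulus tends to $1/q$ from above as $N\to\infty$, which proves the theorem with no spectral input at all. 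If you want to salvage your approach, the honest route is to pick an explicit family and compute, as the paper does; the spectral-deformation heuristic from \cite{Ef} motivates the construction but cannot replace the computation.
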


\bigskip
We obtain from this Theorem that such a sequence of graphs of groups gives arbitrary small exponential error term when we count the number of geodesics. Given a graph of groups $\mathbf{X}=(X,\mathcal{G})$ and its fundamental group $\Gamma=\pi_1(X,\mathcal{G})$, let \[
    N_m(\mathbf{X})=\sum_{\ell(\gamma)=m}\frac{\ell(\gamma_0)}{|\textrm{Cent}(\gamma,\Gamma_{\textrm{Axis}(\gamma)})|}.
\] Here, the sum runs over hyperbolic elements $\gamma$ in $\Gamma$ and $\gamma_0$ is the primitive hyperbolic element underlying $\gamma$. Taking logarithmic derivative of the zeta function yields the following.
\bigskip

\begin{coro}
Given any $\epsilon>0$, we have a graph of groups $\mathbf{X}=(X,\mathcal{G})$ such that 
\[\lim_{m\to\infty}\frac{N_m(\mathbf{X})}{q^m}=1\quad\textrm{ but }\quad |N_m(\mathbf{X})-q^m|>(q-\epsilon)^m\]
for arbitrary large $m$.
\end{coro}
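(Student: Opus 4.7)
My plan is to deduce the corollary from Theorem~\ref{thm:pole-free} by converting the generating function identity
\[u\,\frac{Z_\mathbf{X}'(u)}{Z_\mathbf{X}(u)}=\sum_{m=1}^{\infty}N_m(\mathbf{X})\,u^m\]
into asymptotic information via partial fractions. First I would verify this identity: taking $\log$ of the defining Euler product and differentiating gives
\[u\,\frac{Z_\mathbf{X}'(u)}{Z_\mathbf{X}(u)}=\sum_{[\gamma_0]\in \textrm{P}(\Gamma)}\sum_{k\ge1}\ell(\gamma_0)\,u^{k\ell(\gamma_0)},\]
and one checks that the coefficient of $u^m$ on the right agrees with $N_m(\mathbf{X})$ upon unpacking the centralizer weight: every hyperbolic $\gamma\in\Gamma$ is uniquely a power of a primitive $\gamma_0$, and $|\textrm{Cent}(\gamma,\Gamma_{\textrm{Axis}(\gamma)})|$ is exactly the redundancy incurred when regrouping conjugates sharing a common axis into axis-stabilizer orbits.

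Next, given $\epsilon>0$, I would apply Theorem~\ref{thm:pole-free} to choose $\mathbf{X}=\mathbf{X}_N$ with $N$ large enough that $Z_\mathbf{X}$ has a pole $u_1$ with $\tfrac{1}{q}<|u_1|<\tfrac{1}{q-\epsilon/2}$; this is possible since the pole-free regions shrink to the closed disc $|u|\le 1/q$. By the rationality of $Z_\mathbf{X}$ (via the graph-of-groups Ihara determinant expression obtained earlier), partial fraction expansion of the identity above yields
\[N_m(\mathbf{X})=q^{m}+\sum_{j}c_j\,u_j^{-m},\]
where the $u_j$ run over the remaining poles of $Z_\mathbf{X}$ with appropriate polynomial factors $m^{k_j-1}$ reflecting their multiplicities, and the leading term $q^{m}$ arises from the simple pole of $Z_\mathbf{X}$ at $u=1/q$ governed by the spectral radius of the $(q+1)$-regular tree.

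The first claim $N_m(\mathbf{X})/q^{m}\to 1$ is immediate since every $|u_j|>1/q$ forces each subdominant term $|u_j|^{-m}/q^{m}\to 0$. For the lower bound, I would pair $u_1$ with its complex conjugate (if non-real); its contribution to $N_m(\mathbf{X})-q^{m}$ equals $2|c_1|\,|u_1|^{-m}\cos(m\theta+\phi)$ for $\theta=\arg u_1^{-1}$ and some $\phi$. By Weyl equidistribution when $\theta/2\pi$ is irrational (and a pigeonhole argument otherwise), there are infinitely many $m$ with $|\cos(m\theta+\phi)|\ge 1/2$; for such $m$, combining $|u_1|^{-m}>(q-\epsilon/2)^{m}$ with the fact that the remaining exponentials $|u_j|^{-m}$ are strictly smaller yields $|N_m(\mathbf{X})-q^{m}|>(q-\epsilon)^{m}$ once $m$ is sufficiently large.

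The main obstacle is the oscillation of the subdominant contribution: one must rule out a degenerate cancellation among the finitely many poles on the outermost nontrivial circle. This is handled by the standard fact that a nontrivial finite sum $\sum a_j\lambda_j^m$ of exponentials with distinct $\lambda_j$ cannot vanish identically on an infinite arithmetic progression. Should $u_1$ additionally be arranged to lie on the real axis (for example by exploiting symmetries of the combinatorial Laplacian on $\mathbf{X}_N$), the oscillation analysis is unnecessary and the argument becomes entirely elementary.
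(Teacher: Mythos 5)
Your proposal is correct in substance and follows the same backbone as the paper's argument: the identity $u Z_{\mathbf{X}}'(u)/Z_{\mathbf{X}}(u)=\sum_m N_m(\mathbf{X})u^m$ (valid here because $c_\Gamma=1$), rationality of $Z_{\mathbf{X}_N}$, and extraction of $N_m$ as $q^m$ plus an exponential sum over the reciprocals of the remaining poles. The difference is in how the lower bound is secured. The paper does not argue abstractly from Theorem~\ref{thm:pole-free}: it uses the explicit factorization of $Z_{\mathbf{X}_N}(u)^{-1}$ computed in Section~\ref{sec:6} to write $N_m(\mathbf{X}_N)=q^m+\sum_i(\alpha_i^m+\beta_i^m)-(1+(-1)^m)q^{m/2}+1+(-1)^m$, where the dominant correction $\beta_{2N+1}$ is the reciprocal of a pole already shown to be \emph{negative real} and of modulus tending to $q$; this makes your oscillation and cancellation analysis unnecessary. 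Your more abstract route is workable but has two soft spots you should tighten. First, your partial-fraction formula omits the contribution of the \emph{zeros} of $Z_{\mathbf{X}}$ (here $\pm 1/\sqrt q$, contributing $-(1+(-1)^m)q^{m/2}$); these are harmless but belong in the formula. Second, and more importantly, the fact you invoke --- that a nontrivial exponential sum cannot vanish on an arithmetic progression --- is too weak: non-vanishing does not give a lower bound of order $(q-\epsilon)^m$. What you actually need, after choosing $u_1$ of \emph{minimal} modulus among the poles off $|u|=1/q$ (so that nothing else dominates it), is that for the sum $S_m=\sum_j c_j u_j^{-m}$ over the poles on that innermost nontrivial circle one has $\limsup_m |S_m|^{1/m}=|u_1|^{-1}$; this follows by noting that otherwise the generating function $\sum_m S_m u^m$ would extend holomorphically past $|u_1|$, contradicting that it has poles there. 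With that substitution (or with your real-pole remark, which is exactly what the paper's explicit computation delivers), your argument closes.
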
 

\bigskip

This paper is organized as follows. We prepare basic setup about graphs of groups and cuspidal tree lattices in Section~\ref{sec:2}. In Section~\ref{sec:3}, we define zeta function of graphs of groups and compare it with the Bass-Ihara zeta function of associated weighted graphs. More examples are provided in Section~\ref{sec:5} and \ref{sec:6}.

\subsubsection*{Acknowledgement} The first author is supported by Basic Science Research Institute Fund, whose NRF grant number is 2021R1A6A1A10042944. The second author is supported by NRF of Korea grant RS-2023-0023781122682121230001 and Samsung STF Project no. SSTF-BA2101-01. Both authors thank POSTECH Mathematics Institute, where this work was completed, for its hospitality.

\bigskip

%----------------------------------------------------

\section{Graphs of groups and tree lattices}\label{sec:2}

%\begin{tcolorbox}
%graph $X$, 

%graphs of groups $\mathbf{X}=(X,\mathcal{G})$, 

%edge-indexed graphs $\mathbf{A}=(X, i)$
%\end{tcolorbox}

In this section, we review the basic setup about graphs of groups and associated weighted graphs. The Bass-Serre theory, being developed by Serre \cite{Se} and the substantial contribution of the subsequent work of Bass \cite{Ba}, deals with the theory of group actions on graphs and their covering theory via the notion of graphs of groups.

\subsection{Graphs of groups}

A graph $X$ consists of a set of vertices $Ver(X)$ and a set of oriented edges $Ed(X)$ together with maps
\begin{align*}
Ed(X)\to Ver(X)\times Ver(X), \qquad e\mapsto (s(e),t(e))\,\,(\textrm{source and target})
\end{align*}
and
\begin{align*}
Ed(X)\to Ed(X),\qquad e\mapsto \overline{e}\,\, (\textrm{inverse})
\end{align*}
such that $\overline{\overline{e}}=e$, $\overline{e}\ne e$ and $s(e)=t(\overline{e})$. We will sometimes write $X$ for $Ver(X)$.

\bigskip

\begin{defn}
A graph of groups $\mathbf{X}$ is a pair $(X,\mathcal{G})$ where $X$ is a graph and $\mathcal{G}$ is a collection of groups $G_x$ and $G_e$ attached to each vertex and edge of $X$, together with the condition $G_e=G_{\overline{e}}$ and monomorphisms $\phi_e\colon G_e\to G_{t(e)}$.
\end{defn}

\bigskip

Upon choosing a maximal subtree $T$ of $X$, we can define the fundamental group $\pi_1=\pi_1(\mathbf{X},T)$ as follows. The group $\pi_1$ is generated by $G_x$ $(x\in Ver(X))$ and by elements $g_e$ $(e\in Ed(X))$, subject to the relations
\[g_e\phi_e(g)g_e^{-1}=\phi_{\overline{e}}(g),\qquad g_{\overline{e}}=g_e^{-1}\]
and $g_e=1$ for $e\in Ed(T)$.

Given a group action on a graph, the quotient graph has a natural structure of graph of groups. Let $\Gamma$ be a group acting on $Y$ without inversions of edges (that is, $ge\ne\overline{e}$ for all $g\in\Gamma$) and $X$ be the quotient graph $X=\Gamma\backslash Y$. Choose a maximal subtree $R$ of $X$ and lift it to a subtree $T$ of $Y$. Let $E$ be the representatives of the edges of $X-R$ and lift $E$ to $F(\subset Ed(Y))$ whose sources are in $T$. Let $y_x$ be the vertex of $Y$ lifted from $x\in Ver(X)$ and let $\Gamma_{y_x}$ be the corresponding stabilizer. Similarly, let $f_e$ be the edge of $Y$ lifted from $e\in Ed(X)$ and let $\Gamma_{f_e}$ be the corresponding stabilizer. 

For each edge $f_e\in F$ lifted from $e\in E$, there is a $\gamma_e\in \Gamma$ such that $\gamma_et(f_e)\in Ver(T)$. Since 
\[\Gamma_{\gamma_et(f_e)}=\gamma_e\Gamma_{t(f_e)}\gamma_e^{-1},\] it follows that $\Gamma_{f_e}\subset \Gamma_{t(f_e)}=\gamma_e^{-1}\Gamma_{\gamma_et(f_e)}\gamma_e$.

\bigskip

\begin{thm}[\cite{Ser} Theorem 1.13]
The group $\Gamma$ is generated by $\{\Gamma_{y_x}\colon x\in Ver(X)\}$ and $\{\gamma_e\colon e\in E\}$, subject to the relations 
\[\gamma=\gamma_e^{-1}\gamma'\gamma_e\] for $\gamma\in \Gamma_{y_x}$ with $\gamma'\in \Gamma_{\gamma_et(f_e)}$ as above.
\end{thm}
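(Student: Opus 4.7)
The plan is to introduce an abstract group $\tilde\Gamma$ defined by the presentation in the statement, build a canonical homomorphism $\varphi\colon\tilde\Gamma\to\Gamma$ sending every generator symbol to the corresponding element of $\Gamma$, and prove $\varphi$ is an isomorphism (assuming, as in Serre's Theorem~1.13, that $Y$ is a tree). The stated relations hold in $\Gamma$ since, by the choice of $\gamma_e$, conjugation by $\gamma_e^{-1}$ carries $\Gamma_{\gamma_e t(f_e)}$ onto $\Gamma_{t(f_e)}$, so $\varphi$ is well-defined.

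For surjectivity I would use connectedness of $Y$. Fix a base vertex $v=y_{x_0}\in T$ and take any $g\in\Gamma$. Pick an edge-path in $Y$ from $v$ to $gv$, whose projection to $X=\Gamma\backslash Y$ is a closed path at the image of $v$. Lift this projected closed path back to $Y$ edge by edge, starting at $v$: at each step the lifted edge either lies in a $\Gamma$-translate of an edge of $T$, in which case a vertex-stabilizer element suffices to correct its endpoint, or it lies in the orbit of some $f_e\in F$, requiring the insertion of $\gamma_e^{\pm1}$ followed again by a stabilizer correction. Reading off the accumulated product expresses $g$ as a word in the listed generators.

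Injectivity is the main obstacle. The standard route, which I would follow, is to construct the Bass-Serre universal covering tree $\tilde Y$ associated to $(X,\mathcal G)$ and $T$: its vertices and edges are realised as cosets of the images of the vertex and edge groups inside $\tilde\Gamma$. The key technical step is to verify, by induction on word length together with a normal-form argument of amalgamated/HNN type, that $\tilde Y$ is connected and acyclic, hence a tree with quotient graph of groups canonically isomorphic to $(X,\mathcal G)$. Granted this, $\varphi$ is covered by a $\varphi$-equivariant morphism $\Phi\colon\tilde Y\to Y$ carrying the chosen fundamental domain onto $T\cup F$; since $Y$ is a tree with the same quotient data and $\Phi$ is bijective on each star, $\Phi$ is an isomorphism, so any element of $\ker\varphi$ acts trivially on $\tilde Y$ and is therefore trivial. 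The delicate point is precisely this acyclicity: showing that no nontrivial reduced word in the generators modulo the stated relations represents the identity is what certifies that the relation list is complete, and this is where the tree hypothesis on $Y$ is used crucially.
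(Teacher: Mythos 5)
The paper does not prove this statement: it is quoted as an external result from Serre's \emph{Trees} (the cited Theorem~13 of~I.5.4), so there is no in-paper argument to compare yours against. Your outline is the classical Bass--Serre proof and is correctly organized: well-definedness of $\varphi$ follows from the inclusion $\Gamma_{f_e}\subset\Gamma_{t(f_e)}=\gamma_e^{-1}\Gamma_{\gamma_e t(f_e)}\gamma_e$ recorded just before the theorem; surjectivity follows by lifting a path from $v$ to $gv$ edge by edge and correcting endpoints with vertex-stabilizer elements and with $\gamma_e^{\pm 1}$ on edges not over the spanning tree; and injectivity is obtained by letting the presented group act on the Bass--Serre tree $\tilde Y$ and identifying $\tilde Y$ with $Y$. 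You are also right to flag that the hypothesis that $Y$ is a tree (implicit in the paper's statement, explicit in Serre's) is used only in the injectivity half. The one substantive reservation is that the step you yourself call delicate --- the normal-form/acyclicity argument showing that no nontrivial reduced word $g_0 e_1 g_1\cdots e_n g_n$ in the generators maps to the identity, equivalently that $\tilde Y$ is connected and without circuits --- is the entire mathematical content of the theorem, and in your proposal it is asserted by appeal to ``an argument of amalgamated/HNN type'' rather than carried out. As written this is a correct and well-located roadmap of Serre's proof, not a self-contained one; to close it you would run the standard induction on word length using coset representatives of $\phi_e(G_e)$ in $G_{t(e)}$ (the graph-of-groups analogue of Britton's lemma), after which the identification of $\tilde Y$ with $Y$ and the triviality of $\ker\varphi$ follow as you describe.
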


\bigskip

Thus, the pair $(X,\mathcal{G})$ where $\mathcal{G}$ is the collection of these groups $\Gamma_x$ and $\Gamma_e$ is a graph of groups. We say this is a \emph{quotient graph of groups} for $\Gamma\curvearrowright Y$.

\subsection{Cuspidal tree lattices}

A \emph{tree lattice} is a group $\Gamma$ together with an action on a locally finite tree $\mathcal{T}$ such that every stabilizer group $\Gamma_x$ of each vertex $x\in Ver(\mathcal{T})$ is finite and
\[\textrm{covol}(\Gamma\curvearrowright\mathcal{T})\overset{\textrm{def}}{=}\sum_{x\in X}\frac{1}{|\Gamma_x|}<\infty\] for the quotient graph $X=\Gamma\backslash\mathcal{T}$.

A (finite or infinite) sequence $P=(e_1,\ldots, e_n,\ldots)$ of edges in a graph $X$ is called \emph{path} if $t(e_i)=s(e_{i+1})$. It is said to be without backtracking if $s(e_i)\ne t(e_{i+1})$ for all $i\ge 1$. A ray $r=\{e_1,e_2,\ldots\}$ in $\mathcal{T}$ is an infinite path without backtracking such that $e_i\ne e_j,\overline{e_j}$ for all $i\le j$. Two rays $r$ and $s$ are said to be \emph{equivalent} if they join at some point, that is, if there exist $N\in \mathbb{Z}_{\ge 0}$ and $k\in\mathbb{Z}_{\ge 0}$ such that $r_{j+k}=s_j$ for all $j\ge N$. An equivalence class of rays is called an \emph{end} of $\mathcal{T}$. The set $\partial_\infty\mathcal{T}$ of all ends is called the \emph{visual boundary} of $\mathcal{T}$. For a given vertex $x_0$ and any point $c\in\partial_\infty \mathcal{T}$, there exists a unique ray in $c$ that starts at the point $x_0$. 

\bigskip

\begin{defn}[\cite{Pa2}, Theorem 1.1]
A tree lattice $\Gamma$ acting on $\mathcal{T}$ is called \emph{geometrically finite} (or \emph{cuspidal}) if the quotient graph of groups $(X,\mathcal{G})$ has the following form.
\begin{enumerate}
    \item There is a finite subset $C$ of $X=\Gamma\backslash \mathcal{T}$ such that $X-C$ is a union of finitely many rays $r_1,r_2,\ldots,r_c$.
    \item Each ray $r=\{e_1,e_2,\ldots\}$ in $X-C$ satisfies the following: If $s(e_1)$ is adjacent to some vertex of $C$ and $e_i$'s go towards at ends, then $\Gamma_{s(e_i)}=\Gamma_{e_i}\subset \Gamma_{s(e_{i+1})}$ for every $i\ge 1$ and all these groups $\Gamma_{s(e_i)}$ are finite subgroups of $\Gamma_\omega$ where $\omega$ denotes the end of $\mathcal{T}$ represented by a lifting of $r$.
\end{enumerate}
\end{defn}

\bigskip

We recall that \cite{Lu} gives the general structure theorem for lattices in $F$-rank one semi-simple group $G$ over a non-Archimedean local field $F$.

\bigskip

\begin{thm}[\cite{Lu}, Theorem 6.1]
Let $F$ be a locally compact non-archimedean field, $G$ a groups of $F$-points of a semi-simple connected algebraic $F$-group of $F$-rank one, $\mathcal{T}$ the Bruhat-Tits tree associated with $G$, and $\Gamma$ a lattice in $G$. Then as a subgroup of $\textrm{Aut}(\mathcal{T})$, the group $\Gamma$ is cuspidal.
\end{thm}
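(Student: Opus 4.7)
The plan is to leverage the $F$-rank-one structure of $G$ and non-archimedean reduction theory to identify cusps of $\Gamma\backslash\mathcal{T}$ with $\Gamma$-orbits of ends fixed by unipotent elements, and then to analyze the stabilizer sequence along each cuspidal ray.

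\emph{Boundary and parabolics.} Because $G$ has $F$-rank one, every proper parabolic $F$-subgroup is minimal and all such are conjugate to a fixed $P=M\ltimes N$ with unipotent radical $N$. The action of $G$ on $\partial_\infty\mathcal{T}$ is transitive with stabilizer $P$, so ends correspond bijectively to conjugates $P_\omega=gPg^{-1}$, and a unipotent element of $G$ fixes exactly one end, namely the end whose parabolic contains it. By a non-archimedean analogue of Godement's criterion, $\Gamma$ fails to be uniform iff it contains nontrivial unipotent elements; when this occurs, Harder--Prasad reduction theory for $F$-rank-one groups tells us that the set of $\Gamma$-orbits of ends $\omega$ with $\Gamma\cap N_\omega\neq 1$ is finite, say $\{\omega_1,\ldots,\omega_c\}$. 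These will be the cusps of the quotient.

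\emph{Thick--thin decomposition.} For each $\omega_i$ choose a horoball $\mathcal{H}_i$ based at $\omega_i$ deep enough that the $\Gamma$-translates $\{\gamma\mathcal{H}_j\colon \gamma\in\Gamma, 1\le j\le c\}$ are pairwise disjoint. Invoking the $p$-adic Mahler compactness criterion: away from the cusps, the vertex stabilizers in $G$ remain in a uniformly bounded family of compact open subgroups, so the image of $\mathcal{T}\setminus\bigsqcup_i\Gamma\mathcal{H}_i$ in $\Gamma\backslash\mathcal{T}$ is finite. This image (augmented by finitely many vertices near each cusp as described below) will serve as the core subset $C$ of the definition.

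\emph{Stabilizer stabilization along a cuspidal ray.} Fix a ray $(v_0,v_1,\ldots)$ in a lift of the $i$-th cusp converging to $\omega_i$, with connecting edges $e_n$ from $v_n$ to $v_{n+1}$. The $G$-stabilizers $G_{v_n}$ form an increasing family of compact open subgroups whose union is $P_{\omega_i}$, and $G_{v_n}\cap N_{\omega_i}$ exhausts $N_{\omega_i}$ as $n\to\infty$; in $PGL(2,F)$ this is the familiar $\pi^{-n}\mathcal{O}$ filtration. Since $\Gamma\cap N_{\omega_i}$ is a cocompact lattice in $N_{\omega_i}$ and $\Gamma\cap M_{\omega_i}$ is finite modulo a compact center factor, one checks that for all $n\ge N_i$ (for some $N_i$) the only new elements arising in $\Gamma_{v_{n+1}}\setminus\Gamma_{v_n}$ come from $\Gamma\cap N_{\omega_i}$ and automatically fix the adjacent edge $e_n$; hence $\Gamma_{v_n}=\Gamma_{e_n}\subset\Gamma_{v_{n+1}}\subset\Gamma_{\omega_i}$. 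Absorbing the finite initial segments $v_0,\ldots,v_{N_i}$ into $C$ yields exactly the cuspidal structure prescribed in the definition.

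\emph{Principal obstacle.} The delicate step is the last one in full generality. When $G$ is an $F$-rank-one form with non-abelian unipotent radical (quasi-split unitary or exceptional rank-one types), the filtration $\{G_{v_n}\cap N_{\omega_i}\}$ has to be read off from the valuated root datum of Bruhat--Tits rather than from the explicit $PGL_2$ picture, and one must verify that $\Gamma\cap N_{\omega_i}$ meets every step of this filtration sufficiently densely to force the chain of equalities $\Gamma_{v_n}=\Gamma_{e_n}$ beyond a finite index. This is where the lattice hypothesis is genuinely used (discreteness alone does not suffice); Prasad's strong approximation theorem together with cocompactness of $\Gamma\cap N_{\omega_i}$ in $N_{\omega_i}$ supplies what is needed, after which the cuspidal conclusion follows cleanly.
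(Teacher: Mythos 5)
First, a point of comparison: the paper does not prove this statement at all. It is quoted verbatim as an external result (\cite{Lu}, Theorem 6.1), so there is no internal proof to measure your attempt against; any assessment has to be of your sketch on its own merits, relative to Lubotzky's actual argument.

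On those merits, your outline is a reasonable reconstruction of the reduction-theoretic template, but it has a gap that is more serious than the one you flag at the end. The theorem concerns \emph{arbitrary} lattices in $G$, and the only nontrivial case is positive characteristic (over a $p$-adic field a discrete subgroup contains no nontrivial unipotents, since a unipotent element lies in a compact subgroup and generates an infinite cyclic group, so every lattice is cocompact and one may take $C$ to be the whole finite quotient). In characteristic $p$, rank-one groups admit a great profusion of non-arithmetic lattices --- constructing them is in fact a main theme of \cite{Lu} --- and for these neither Harder--Prasad reduction theory nor ``Mahler compactness for the thick part'' is available; both are theorems about arithmetic quotients. So the two load-bearing steps of your argument (finiteness of the set of cusps, and finiteness of the image of the thick part) are only justified in the arithmetic case. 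Lubotzky's route is genuinely different: he works directly on the tree, using the finite-covolume condition $\sum_x 1/|\Gamma_x|<\infty$ for the quotient graph of groups together with the algebraic structure of end stabilizers $P=MN$ (the horospherical subgroup $N$ and its interaction with vertex stabilizers along a ray) to force the quotient minus a finite set to be a union of rays with increasing stabilizers. Note also that cuspidality is \emph{false} for general tree lattices (Bass--Lubotzky give non-geometrically-finite examples), so any correct proof must use the ambient algebraic group in an essential way; your sketch does this only through reduction theory, which is exactly the ingredient that does not extend. The final stabilizer-chain step you honestly flag as the ``principal obstacle'' is indeed also incomplete, but it is the secondary issue.
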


\bigskip

%----------------------------------------

\section{Bass-Ihara zeta function and Selberg zeta function of tree lattices}\label{sec:3}
In this section, we define Selberg zeta function of graphs of groups and compare with Bass-Ihara zeta function of weighted graphs. We also present a determinant formula for the zeta function in terms of the edge-adjacency operator.

\subsection{Zeta function of tree lattices}

Given a path $P=(e_1,\ldots, e_n)$ in a graph $X$, we call $n$ the length of $P$ and denote it by $\ell(P)$. A path $P=(e_1,\ldots,e_n)$ is \emph{closed} if $t(e_n)=s(e_1)$. We say two closed paths $(e_1,\ldots,e_n)$ and $(f_1,\ldots, f_n)$ are \emph{equivalent} if there is $k\in\mathbb{Z}/n\mathbb{Z}$ such that $e_i=f_{i+k}$ for all $i\in\mathbb{Z}/n\mathbb{Z}$. An equivalence class of closed path is called \emph{cycle}. If a cycle $C$ satisfies $C \ne D^{f}$ for any $f > 1$ and a closed path $D$ in $G$, then $C$ is called a \emph{primitive cycle}. Hence, for any cycle $C$, there is a unique primitive cycle $C_0$ such that $C=C_0^m$ for some $m\ge 1$. A cycle $C=[(e_1,\ldots,e_n)]$ is called \emph{reduced} if it consists only of closed paths without backtracking, that is, $\overline{e_i}\ne e_{i+1}$ for all $1\le i\le n$.

We begin by reviewing the zeta function of finite graphs. For a finite graph $X$, the Ihara zeta function is defined at a complex number $u$, for which $\left| u \right|$  is sufficiently small, by 
\[Z_{X}(u)=\prod_{[C]}\frac{1}{1-u^{\ell(C)}}\]
  where the product is taken over all reduced primitive cycles $\left[C\right]$ in $X$. The product, being infinite in general, converges to a reciprocal of a polynomial. In fact, it satisfies Ihara determinant formula
\[Z_X(u)=\frac{1}{(1-u^2)^{-\chi}\det(1-uA+u^2Q)},\]
where $A$ is the adjacency operator of the graph, $Q+1$ is the valency operator, and $\chi$ is the Euler characteristic $|Ver(X)|-\frac{|Ed(X)|}{2}$ of the graph $X$.

These zeta functions are generalized to certain infinite graphs. Indeed, the authors in \cite{DK} extended the theory of Ihara zeta functions to cuspidal lattices, and derived a prime geodesic theorem for arithmetic lattices of rank one Lie groups over local fields. 

Suppose now that $G$ is a group acting on an infinite regular tree $\mathcal{T}$. We may classify elements in $G$ that act without inversions on $\mathcal{T}$, as follows:
\begin{enumerate}
    \item[(a)] identity
    \item[(b)] elliptic: elements which have fixed vertices on $\mathcal{T}$
    \item[(c)] hyperbolic: elements which have no fixed vertices on $\mathcal{T}$ (and hence have two fixed points on $\partial_\infty \mathcal{T}$)
\end{enumerate}

Given a hyperbolic element $g\in G$, let 
\[\ell(g)=\min\{d(x,gx)\colon x\in Ver(\mathcal{T})\}.\] Then the subset $\textrm{Axis}(g)$ of vertices in $\mathcal{T}$ given by $\{x\in Ver(\mathcal{T})\colon d(x,gx)=\ell(g)\}$ constitutes an infinite path in $\mathcal{T}$ and $g$ induces a shift by the length $\ell(g)$ on $\textrm{Axis}(g)$. Here, $\ell(g)$ is called the \emph{translation length} of $g$ (\cite{Ser} p.63, see also \cite{Na}). A hyperbolic element $g$ is called \emph{primitive} if $g\ne h^r$ for any $h\in G$ and $r>1$.

Furthermore, in the case where $\Gamma=PGL(2,\mathbb{F}_q[t])$, the following is more detailed classification from the view of actions on the Bruhat-Tits tree $\mathcal{T}$ and its geometric boundary $\partial_\infty\mathcal{T}$ at infinity.

\begin{enumerate}
    \item[(a)] identity
    \item[(b)] elliptic: elements which have fixed vertices on $\mathcal{T}$ and no fixed points on $\partial_\infty \mathcal{T}$.
    \item[(c)] parabolic: elements which have fixed vertices on $\mathcal{T}$ and a fixed point on $\partial_\infty \mathcal{T}$.
    \item[(d)] split hyperbolic: elements which have fixed vertices on $\mathcal{T}$ and two fixed points on $\partial_\infty \mathcal{T}$.
    \item[(e)] hyperbolic: elements which have no fixed vertices on $\mathcal{T}$ (and hence have two fixed points on $\partial_\infty \mathcal{T}$)
\end{enumerate}

It is particularly useful in the classification of conjugacy classes in $PGL(2,\mathbb{F}_q[t])$, which is analogous to the case of $PSL(2,\mathbb{Z})$. Let $C^\times$ be a complete set of representatives of equivalence classes in $\mathbb{F}_q^\times\backslash\{1\}$ defined by the relation $ab=1$ anc $C_+$ for a complete set representatives of equivalence classes in $\mathbb{F}_q$ defined by the relation $a+b=0$. Let $I$ be the set of monic polynomials in $\mathbb{F}_q[t]$ and $D$ be the subset of $I$ consisting of square-free monic polynomials of even degree. Suppose that $\omega=x+y\sqrt{d}$ is a quadratic irrational functions with $x,y\in\mathbb{F}_q(t)$, $\omega$ satisfies the quadratic equation
\[C\omega^2-B\omega+A=0,\quad A,B,C\in\mathbb{F}_q[t]\]
with $\textrm{gcd}(A,B,C)=1$, and the coefficient of the highest power in $t$ of $2Cy$ is $1$. Then, the polynomicals $A,B,C$ are uniquely determined. In this case, we denote by $\{A,B,C\}=\omega$. Under this setting, the discriminant of $\omega$, defined by $B^2-4AC$, is eqaul to $4C^2y^2d$. 

We recall the conjugacy classes classification result of the group $PGL(2,\mathbb{F}_q[t])$.

\bigskip

\begin{prop}[\cite{Na}, Proposition~2.1]\label{prop:pgl2conj} Let $q$ be an odd prime power and $\alpha$ be a generator of $\mathbb{F}_q^\times$. Let $h_{l\sqrt{d}}$ be the narrow class number of the order $\mathcal{O}_{l\sqrt{d}}$ in $\mathbb{F}_q(t)(\sqrt{d})$ and $\epsilon_{l\sqrt{d}}=t_0+u_0l\sqrt{d}$ be an fundamental unit of $\mathcal{O}_{l\sqrt{d}}$. For every real quadratic irrational function $\omega=\{A,B,C\}$ of discriminant $dl^2$, let
\[\gamma_\omega=\begin{pmatrix} (t_0+Bu_0)/2 & -Au_0 \\ Cu_0 & (t_0-Bu_0)/2\end{pmatrix}.\]
Then, a complete set of representatives of conjugacy classes of $PGL(2,\mathbb{F}_q[t])$ is given by the following five types of elements:
\begin{enumerate}
    \item identity
    \item elliptic: $\left\{\begin{pmatrix} a/2 & \alpha/4 \\ 1 & a/2 \end{pmatrix}\right\}$, $(a\in C_+)$
    \item parabolic: $\left\{\begin{pmatrix} 1 & x \\ 0 & 1 \end{pmatrix}\right\}$, $(x\in I)$
    \item split hyperbolic: $\left\{\begin{pmatrix}1 & 0 \\ 0 & c \end{pmatrix}\right\}$, $(c\in C^\times)$
    \item hyperbolic: $\{\gamma_\omega^n\}$ where $d\in D, l\in I, n=1,2,3,\ldots$ and $\omega$ runs through a complete set of representative of $\Gamma$-equivalence classes of the real quadratic irrational functions of discriminant $dl^2$.
\end{enumerate}
\end{prop}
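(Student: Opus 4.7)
The plan is to classify conjugacy classes in $PGL(2, \mathbb{F}_q[t])$ first by the geometric type of the action on the Bruhat--Tits tree $\mathcal{T}$ and its boundary $\partial_\infty \mathcal{T}$, and then to produce canonical representatives within each geometric type by linear algebra and, in the hyperbolic case, by reduction theory for quadratic irrationals. Since $\mathcal{T}$ is the tree of $PGL(2, K)$ for $K = \mathbb{F}_q((1/t))$ and $\partial_\infty \mathcal{T} = \mathbb{P}^1(K)$, the type of $\gamma$ is determined by where the eigenvalues of a lift $\widetilde{\gamma} \in GL(2, \mathbb{F}_q[t])$ lie: scalar (identity), a pair in $\overline{\mathbb{F}_q} \setminus \mathbb{F}_q^\times$ not in $\mathbb{F}_q(t)$ (elliptic), coincident in $\mathbb{F}_q(t)$ with a non-trivial Jordan block (parabolic), distinct in $\mathbb{F}_q(t)$ (split hyperbolic), or distinct in a quadratic extension $\mathbb{F}_q(t)(\sqrt{d})$ with $d \in D$ (hyperbolic).

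For the first four types I would produce representatives by routine linear algebra and a short orbit count. \emph{Split hyperbolic:} diagonalize over $\mathbb{F}_q(t)$, scale to obtain $\mathrm{diag}(1, c)$ with $c \in \mathbb{F}_q^\times$, and note that the Weyl element exchanges $c$ with $c^{-1}$, yielding the parameter set $C^\times$. \emph{Parabolic:} conjugate to $\left(\begin{smallmatrix}1 & x \\ 0 & 1\end{smallmatrix}\right)$ with $x \in \mathbb{F}_q[t]$; the residual action of diagonal matrices rescales $x$ by elements of $(\mathbb{F}_q^\times)^2$, and after a further conjugation and monic normalization the orbit representative lies in $I$. \emph{Elliptic:} the eigenvalues satisfy an irreducible quadratic over $\mathbb{F}_q(t)$ whose roots lie in $\overline{\mathbb{F}_q}$, so $\widetilde{\gamma}$ satisfies an equation with constant coefficients; a standard companion-matrix argument together with scalar and Weyl symmetry reduces the trace parameter to $C_+$.

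The main obstacle is the hyperbolic classification. For hyperbolic $\gamma$ the fixed points on $\partial_\infty \mathcal{T}$ form a Galois-conjugate pair $\{\omega, \overline{\omega}\} \subset \mathbb{F}_q(t)(\sqrt{d}) \setminus \mathbb{F}_q(t)$, and $\Gamma$-conjugation acts by M\"obius transformations on $\omega$. The key identification I would establish is that the pointwise stabilizer of $\mathrm{Axis}(\omega)$ in $\Gamma$, modulo scalars, equals the image of the unit group of the order $\mathcal{O}_{l\sqrt{d}}$ of discriminant $dl^2$ acting on $K(\sqrt{d})$, so that every hyperbolic element with this axis is a power $\gamma_\omega^n$ of the matrix realizing the fundamental unit $\epsilon_{l\sqrt{d}}$ in the basis $\{1, \omega\}$; a direct computation verifies this matrix coincides with the one in the statement. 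It then remains to invoke the function-field analogue of Gauss's reduction theory for indefinite binary quadratic forms over $\mathbb{F}_q[t]$ to conclude that $\Gamma$-orbits on quadratic irrationals of fixed discriminant $dl^2$ are counted by the narrow class number $h_{l\sqrt{d}}$. Granted reduction theory and the rank-one structure of the unit group $\mathcal{O}_{l\sqrt{d}}^\times = \langle \pm\epsilon_{l\sqrt{d}}\rangle$, the hyperbolic representatives $\{\gamma_\omega^n\}$ range without repetition exactly over the asserted parameter set $(d, l, n, [\omega])$, and combining the five cases completes the classification. The nontrivial external inputs I would cite are the boundary identification $\partial_\infty \mathcal{T} = \mathbb{P}^1(K)$ and the Dirichlet-type finiteness of $h_{l\sqrt{d}}$ together with the cyclicity-up-to-sign of the unit group of $\mathcal{O}_{l\sqrt{d}}$.
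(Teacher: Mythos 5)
The paper offers no proof of this proposition: it is imported verbatim from Nagoshi \cite{Na}, Proposition~2.1, so there is no internal argument to compare yours against. Your outline does follow the strategy of Nagoshi's actual proof, which is the function-field analogue of the classical conjugacy classification in $PSL(2,\mathbb{Z})$ (Gauss's correspondence between hyperbolic classes and ideal classes of real quadratic orders), so the architecture is the right one.

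Two remarks on where the sketch is thinnest. First, the trichotomy by eigenvalue location can and should be sharpened: for a lift $\widetilde\gamma\in GL(2,\mathbb{F}_q[t])$ one has $\det\widetilde\gamma\in\mathbb{F}_q^\times$, so if $\mathrm{tr}\,\widetilde\gamma$ is non-constant the discriminant $(\mathrm{tr}\,\widetilde\gamma)^2-4\det\widetilde\gamma$ automatically has even degree and square leading coefficient; hence the eigenvalues lie in $\mathbb{F}_q(\!(t^{-1})\!)$ with valuations $\mp\deg(\mathrm{tr}\,\widetilde\gamma)$, the element is genuinely hyperbolic with translation length $2\deg(\mathrm{tr}\,\widetilde\gamma)$ (consistent with $N(\gamma)=q^{2\deg(\mathrm{tr}\gamma)}$ in Section~\ref{sec:1}), and extracting the square part of the discriminant lands $d$ in $D$. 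Conversely, constant trace accounts for all of the identity, elliptic, parabolic and split hyperbolic cases, which is why those reduce to computations essentially inside $PGL(2,\mathbb{F}_q)$ plus the parabolic normalization. Second, and more substantively, ``a standard companion-matrix argument'' and ``reduction theory'' conceal the actual content of the elliptic and hyperbolic cases: what is required is the Latimer--MacDuffee-type bijection between $GL(2,\mathbb{F}_q[t])$-conjugacy classes of matrices with a given irreducible characteristic polynomial and (narrow) ideal classes of the corresponding order $\mathcal{O}_{l\sqrt{d}}$, together with the identification of the centralizer of a hyperbolic element with $\langle\epsilon_{l\sqrt{d}}\rangle$ modulo torsion and the norm/sign bookkeeping that makes the \emph{narrow} class number the correct count. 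You name these inputs, so nothing is wrong, but as written the argument reduces the proposition to precisely the theorems one would need to prove; as a citation-level justification it is acceptable and consistent with how the paper itself treats the statement.
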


\bigskip

Now we define a zeta function of a graph of groups.

\bigskip

\begin{defn}
Given a graph of groups $(X,\mathcal{G})$, we define the zeta function of $(X,\mathcal{G})$ by 
\[Z_{(X,\mathcal{G})}(u)=\prod_{[\gamma]}\frac{1}{1-u^{\ell(\gamma)}}\]
where the product runs over conjugacy classes $[\gamma]$ of primitive hyperbolic elements $\{\gamma\}$ of $\pi_1=\pi_1(X,\mathcal{G})$.
\end{defn}

\bigskip

To a graph of groups $(X,\mathcal{G})$, we associate the weighted graph $(X,w)$ as follows. Given an edge $e$ of a graph of finite groups $(X,\mathcal{G})$, let $w(e)$ be the ratio $\frac{|G_{s(e)}|}{|G_e|}$ of the size of attached groups $G_{s(e)}$ and $G_e$.
Now given a closed path $P=(e_1,\ldots,e_n)$ in $(X,\mathcal{G})$, let
\[w(p)=\prod_{j\in\mathbb{Z}/n\mathbb{Z}}w(e_j,e_{j+1})\]
where $$w(e,e')=\left\{\begin{array}{ll}w(e'), & e'\ne \overline{e}\\
w(e')-1, & e'=\overline{e}\end{array}\right..$$
Then, if we write
\[\frac{uZ'_{(X,\mathcal{G})}(u)}{Z_{(X,\mathcal{G})}(u)}=\sum_{m=1}^{\infty} R_mu^m\] by taking logarithmic derivative of $Z_{(X,\mathcal{G})}(u)$, then this yields
\begin{equation}\label{eqn:Rm}
Z_{(X,\mathcal{G})}(u)=\exp\left(\sum_{m=1}^{\infty}R_m\frac{u^m}{m}\right)
\end{equation}
and
\[R_m=\sum_{[\gamma]\colon \ell(\gamma)=m}\ell(\gamma_0)\] where $\gamma_0$ denotes the primitive hyperbolic element underlying $\gamma$.

Meanwhile, according to the proof of the Proposition~8.4 of \cite{DK}, the zeta function for a weighted graph $(X,w)$ satisfies
\[Z_{(X,w)}(u)=\prod_{[C]}\frac{1}{1-w(C)u^{\ell(C)}}=\exp\left(\sum_{m=1}^{\infty}N_m\frac{u^m}{m}\right)\]
where
\begin{equation}\label{eqn:Nm}
    N_m=\sum_{ \ell(c)=m}w(c)\ell(c_0)=\sum_{\ell(\gamma)=m}\frac{\ell(\gamma_0)}{|\textrm{Cent}(\gamma,\Gamma_{\textrm{Axis}(\gamma)})|}.
\end{equation}

\bigskip

\begin{defn}\label{def:cr}

A discrete group $\Gamma$ acting on a $(q+1)$-regular tree $\mathcal{T}_{q+1}$ is called \emph{centrally rigid} if  $c_\Gamma=|\textrm{Cent}(\gamma,\Gamma_{\textrm{Axis}(\gamma)})|$ is a constant on $P(\Gamma)$.
\end{defn}

\bigskip

 For example, if $\Gamma=PGL(2,\mathbb{F}_q[t])$, then   $|\textrm{Cent}(\gamma,\Gamma_{\textrm{Axis}(\gamma)})|=q-1$ for any $\gamma\in \Gamma$ due to Proposition~\ref{prop:pgl2conj}. The following theorem describes the relation between $Z_{(X,\mathcal{G})}(u)$ and $Z_{(X,w)}(u)$ in case of $\pi_1=\pi_1(X,\mathcal{G})$ is isomorphic to a centrally rigid subgroup of $\textrm{Aut}(\mathcal{T}_{q+1})$.
 
 \bigskip

%\begin{lem}\label{lem:index}
%Given a subgroup $\Gamma$ of $PGL(2,\mathbb{F}_q[t])$ and any hyperbolic element $\gamma\in \Gamma$, we have $$|\textrm{Cent}(\gamma,\Gamma_{\textrm{Axis}(\gamma)})|=[\Gamma:\Gamma\cap PSL(2,\mathbb{F}_q[t])].$$
%Now if we consider the homomorphism $\det\colon\Gamma\to\mathbb{F}_q^\times$, then its kernel is $\Gamma\cap PSL(2,\mathbb{F}_q[t])$.
%\end{lem}

\begin{thm}
Let $\mathbf{X}=(X,\mathcal{G})$ be a $(q+1)$-regular graph of groups arising as a quotient by a centrally rigid subgroup $\Gamma$ of $\textrm{Aut}(\mathcal{T}_{q+1})$ and $\mathbf{A}=(X,w)$ be the associated weighted graph. Then, we have
\[Z_\mathbf{X}(u)=Z_\mathbf{A}(u)^{c_\Gamma}.\]
\end{thm}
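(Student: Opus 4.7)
The plan is to compare the logarithmic derivatives of $Z_\mathbf{X}(u)$ and $Z_\mathbf{A}(u)$ coefficient by coefficient, using the two identities for $R_m$ and $N_m$ already recorded in equations~\eqref{eqn:Rm} and~\eqref{eqn:Nm}. Since both zeta functions are defined as exponentials of their log-derivative series and the constant term of each is $1$, it suffices to prove that $R_m = c_\Gamma\, N_m$ for every $m \geq 1$; exponentiating then yields the claimed identity $Z_\mathbf{X}(u)=Z_\mathbf{A}(u)^{c_\Gamma}$.

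First I would fix notation and rewrite
\[
R_m = \sum_{[\gamma]\,:\,\ell(\gamma)=m} \ell(\gamma_0),\qquad
N_m = \sum_{[\gamma]\,:\,\ell(\gamma)=m} \frac{\ell(\gamma_0)}{|\mathrm{Cent}(\gamma,\Gamma_{\mathrm{Axis}(\gamma)})|},
\]
where in both cases the sum ranges over the same set of conjugacy classes of hyperbolic elements $\gamma \in \pi_1(X,\mathcal{G}) \cong \Gamma$ of translation length $m$. The equality of the indexing sets is the essential bookkeeping point: the Bass--Serre dictionary identifies conjugacy classes of (primitive) hyperbolic elements of $\pi_1(X,\mathcal{G})$ with the cycle data on $(X,w)$ used to define $N_m$ in Proposition~8.4 of~\cite{DK}, and in both formulas one weights by $\ell(\gamma_0)$, the length of the underlying primitive element.

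With the indexing set identified, the central rigidity hypothesis enters in one line: by Definition~\ref{def:cr}, $|\mathrm{Cent}(\gamma,\Gamma_{\mathrm{Axis}(\gamma)})| = c_\Gamma$ is independent of $\gamma \in P(\Gamma)$, and since every hyperbolic $\gamma$ is a power of some $\gamma_0 \in P(\Gamma)$ on the same axis, the centralizer in $\Gamma_{\mathrm{Axis}(\gamma)}$ still has order $c_\Gamma$. Pulling this constant out of the sum gives
\[
N_m = \frac{1}{c_\Gamma}\sum_{[\gamma]\,:\,\ell(\gamma)=m}\ell(\gamma_0) = \frac{R_m}{c_\Gamma}.
\]

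Finally, I would substitute back to obtain
\[
Z_\mathbf{X}(u) = \exp\!\left(\sum_{m\geq 1} R_m \frac{u^m}{m}\right) = \exp\!\left(c_\Gamma \sum_{m\geq 1} N_m \frac{u^m}{m}\right) = Z_\mathbf{A}(u)^{c_\Gamma}.
\]
The only genuine point to verify carefully, which I view as the main (and only) obstacle, is the claim that the centralizer order is truly constant across all hyperbolic elements and not just across primitive ones; this needs a brief argument that $\mathrm{Cent}(\gamma_0^n,\Gamma_{\mathrm{Axis}(\gamma_0)}) = \mathrm{Cent}(\gamma_0,\Gamma_{\mathrm{Axis}(\gamma_0)})$, which follows because any element of $\Gamma_{\mathrm{Axis}(\gamma_0)}$ commuting with $\gamma_0^n$ preserves its axis and acts by a translation (or order-two rotation) commuting with the shift by $n\ell(\gamma_0)$, forcing it to commute with $\gamma_0$ itself.
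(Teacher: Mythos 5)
Your proposal is correct and follows essentially the same route as the paper: both arguments compare the coefficients $R_m$ and $N_m$ of the logarithmic series via Equations~(\ref{eqn:Rm}) and~(\ref{eqn:Nm}), pull the constant $c_\Gamma$ out of the sum using central rigidity, and exponentiate. Your closing remark that the centralizer order must be checked to be constant on all hyperbolic elements, not merely primitive ones, is a point the paper's proof passes over silently, and your one-line justification of it is a worthwhile addition.
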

\begin{proof} By Equations~(\ref{eqn:Rm}) and (\ref{eqn:Nm}), we have
\begin{align*}
Z_\mathbf{A}(u)&=\,\exp\left(\sum_{m=1}^{\infty}N_m\frac{u^m}{m}\right)=\exp\left(\sum_{m=1}^{\infty}\sum_{\ell(\gamma)=m}\frac{\ell(\gamma_0)}{|\textrm{Cent}(\gamma,\Gamma_{\textrm{Axis}(\gamma)})|}\frac{u^m}{m}\right)\\
&=\,\exp\left(\frac{1}{c_\Gamma}\sum_{m=1}^{\infty}\sum_{\ell(\gamma)=m}\ell(\gamma_0)\frac{u^m}{m}\right)
=\exp\left(\sum_{m=1}^{\infty}\sum_{\ell(\gamma)=m}\ell(\gamma_0)\frac{u^m}{m}\right)^{\frac{1}{c_\Gamma}}\\
&=\,Z_\mathbf{X}(u)^{\frac{1}{c_\Gamma}}.
\end{align*}
This yields the statements of the theorem.
\end{proof}

 %$Z_{(X,\mathcal{G})}(u)$ is equal to 
%$$Z_{(X,\mathcal{G})}(u)=\exp\left(\sum_{m=1}^{\infty}N_m\frac{u^m}{m}\right)=\prod_{[C]}(1-w(C)u^{\ell(C)})^{-1}$$ where the product is over all primitive cycles (not necessarily reduced) $\left[C\right]$ in $X$.
It is shown in \cite{DK} that if $\Gamma$ is cuspidal, then the product $Z_X(u)$ converges to a rational function for small $|u|$. Therefore, if $(X,\mathcal{G})$ is geometrically finite and the fundamental group $\pi_1(X,\mathcal{G})$ is centrally rigid, then the Selberg zeta function $Z_{(X,\mathcal{G})}(u)$ converges to a rational function.

\bigskip

\subsection{Zeta function as a determinant}
Assume that $\Gamma$ is a cuspidal tree lattice acting on the tree $\mathcal{T}$ and let $\mathbf{A}=(X,w)$ be the associated weighted graph obtained by this action. Define the operator $T\colon Ed(X)\to Ed(X)$ by
\[T(e)=\sum_{e'}w(e,e')e'\]
where the sum runs over all edges $e'$ with $s(e')=t(e)$. Then, for any positive integer $n$, the operator $T^n$ is traceable, and the trace is given by 
\[\textrm{Tr}(T^n)=\sum_{C\colon \ell(C)=n}\ell(C_0)w(C)\]
where the sum runs over all cycles $C$ of length $n$ and $C_0$ is the underlying prime cycle of a given cycle $C$.

The determinant of the operator $I-uT$ is defined by the limit of the determinant of the restriction of $I-uT$ on a sequence of connected subgraphs $\{F\}$ with weights exhausting $X$, that is
\[\det(I-uT)=\lim_{F\rightarrow X} \det(I-uT|_F).\]
Theorem 4.3 in \cite{DK} shows that $\det(I-uT)$ exists for sufficiently small $|u|$ if $\Gamma$ is geometrically finite. Thus, we have the following Lemma.

\bigskip

\begin{lem}[Lemma 4.2 and Theorem 4.3 of \cite{DK}]
There is $\alpha>0$ such that the series 
\[-\sum_{n=1}^\infty \frac{u^n}{n}T^n\] converges weakly for $u\in\mathbb{C}$ with $|u|<\alpha$. If we denote by $\log(1-uT)$ the convergent operator, then this is traceable and we also have
\[Z_\mathbf{A}(u)=\frac{1}{\exp(\textrm{Tr}(\log(1-uT)))}=\frac{1}{\det(I-uT)}.\]

\end{lem}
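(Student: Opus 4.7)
The plan is to derive the identity first as a formal power-series equality and then lift it to an analytic statement using the decay provided by the cuspidal hypothesis. The backbone is the combinatorial trace formula
\[\textrm{Tr}(T^n)=\sum_{C\colon\ell(C)=n}\ell(C_0)w(C),\]
which, in view of (\ref{eqn:Nm}), is precisely $N_n$. Once this is established, the classical matrix identity $\log\det(I-uT)=\textrm{Tr}\log(I-uT)=-\sum_{n\ge 1}(u^n/n)\textrm{Tr}(T^n)$, combined with the exponential representation $Z_\mathbf{A}(u)=\exp(\sum_m N_m u^m/m)$ recorded just after (\ref{eqn:Nm}), delivers both equalities of the lemma simultaneously.

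I would first verify the trace formula on finite subgraphs. Restricting $T$ to the edges of a finite connected subgraph $F\subset X$ yields a finite matrix whose $(e,e)$ entry of the $n$-th power is the sum over length-$n$ closed walks $e=e_0,e_1,\ldots,e_n=e$ of the weight product $\prod_{j=0}^{n-1}w(e_j,e_{j+1})$, which equals $w(C)$ for the underlying closed path. Summing over $e\in Ed(F)$ groups the contributions by cycles: each cycle $C$ of length $n$ is counted $\ell(C_0)$ times, once per starting edge in its cyclic orbit, giving $\textrm{Tr}(T^n|_F)=\sum_{C\subset F,\,\ell(C)=n}\ell(C_0)w(C)$. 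This step is purely combinatorial and introduces no convergence issue, and letting $F\nearrow X$ yields the formula in the infinite setting provided the tails can be controlled.

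The main technical step is exactly this passage from finite exhaustions to $X$. One needs $\alpha>0$ such that $|N_n|\le K\alpha^{-n}$ uniformly; this follows from the cuspidal structure, namely the nested edge stabilizers along each cuspidal ray together with the $O(q^n)$ bound on reduced closed walks of length $n$ in $\mathcal{T}_{q+1}$, and is the content of the decay estimates in \cite{DK}, Theorem~4.3. With this bound, $-\sum_n(u^n/n)T^n$ converges weakly for $|u|<\alpha$, its trace (defined via the exhaustion) equals $\sum_n(u^n/n)N_n=-\log Z_\mathbf{A}(u)$, and the finite-dimensional identity $\det(I-uT|_F)=\exp(\textrm{Tr}\log(I-uT|_F))$ passes to the limit through the very definition of $\det(I-uT)$ recalled above. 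The main obstacle is precisely this exchange of trace with the infinite sum, since $T$ is not trace-class on $\ell^2(Ed(X))$ in the Banach sense; defining $\textrm{Tr}$ and $\det$ through the exhaustion $\{F\}$ and controlling the tails uniformly in $F$ via the cuspidal decay is what keeps the argument honest.
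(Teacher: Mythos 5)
Your proposal is correct and follows essentially the same route as the paper, which itself gives no independent argument but derives the lemma from the combinatorial trace formula $\textrm{Tr}(T^n)=\sum_{\ell(C)=n}\ell(C_0)w(C)$ stated just above it, the finite-dimensional identity $\det(I-uT|_F)=\exp(\textrm{Tr}\log(I-uT|_F))$, and the exhaustion-limit convergence supplied by Lemma~4.2 and Theorem~4.3 of \cite{DK}. Your cycle-counting (each cycle weighted by $\ell(C_0)$, one contribution per distinct base edge in its rotation class) and your identification of the tail-control in the exhaustion as the genuine technical content are both accurate.
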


\vspace{1em}

\begin{ex}
Let $\Gamma=PGL(2,\mathbb{F}_q[t])$. It is a lattice of $PGL(2,\mathbb{F}_q(\!(t^{-1})\!))$ whose Bruhat-Tits tree is the infinite $(q+1)$-regular tree. The quotient graph of groups $(X,\mathcal{G})$ under $\Gamma$-action is given by 
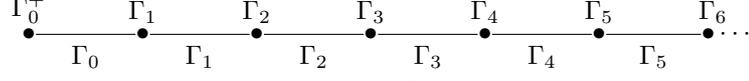
\begin{figure}[H]\label{fig:2}
\begin{center}
\begin{tikzpicture}[every loop/.style={}]
  \tikzstyle{every node}=[inner sep=0pt]
  \node (0) {$\bullet$} node [above=4pt] at (0,0) {$\Gamma_0^+$};
  \node (2) at (1.5,0) {$\bullet$} node [above=4pt] at (1.5,0) {$\Gamma_1$}; 
  \node (4) at (3,0) {$\bullet$}node [above=4pt] at (3,0) {$\Gamma_2$}; 
  \node (6) at (4.5,0) {$\bullet$}node [above=4pt] at (4.5,0) {$\Gamma_3$}; 
  \node (8) at (6,0) {$\bullet$}node [above=4pt] at (6,0) {$\Gamma_4$}; 
  \node (10) at (7.5,0) {$\bullet$}node [above=4pt] at (7.5,0) {$\Gamma_5$}; 
  \node (11) at (9.2,0) {$\bullet \cdots$}node [above=4pt] at (9,0) {$\Gamma_6$}; 

  \path[-] (0) edge node [below=4pt] {$\Gamma_0$} (2)
 (2) edge node [below=4pt] {$\Gamma_1$} (4)
(4) edge node [below=4pt] {$\Gamma_2$} (6)
 (6) edge node [below=4pt] {$\Gamma_3$} (8)
 (8) edge node [below=4pt] {$\Gamma_4$} (10)
 (10) edge node [below=4pt] {$\Gamma_5$} (11);
\end{tikzpicture}
\caption{The quotient graph of groups for $PGL(2,\mathbb{F}_q[t])$-action}
\end{center}
\end{figure}
Here, $\Gamma_0^+=PGL(2,\mathbb{F}_q)$, $\Gamma_0=\Gamma_0^+\cap\Gamma_1$ and 
\[\Gamma_n=\left\{\begin{pmatrix}a & b \\ 0 & d\end{pmatrix}\colon a,b\in\mathbb{F}_q^\times, b\in\mathbb{F}_q[t], \deg(b)\le n\right\}\textrm{ for }n\ge 1.\]
Let $\mathbf{A}=(X,w)$ be the associated weighted graph. Then, its Bass-Ihara zeta function is 
\[Z_\mathbf{A}(u)=\frac{1-qu^2}{1-q^2u^2}\]
while the Selberg zeta function for $\mathbf{X}=(X,\mathcal{G})$ is
\[Z_\mathbf{X}(u)=\left(\frac{1-qu^2}{1-q^2u^2}\right)^{q-1}.\]
\end{ex}

\vspace{1em}

%%%%%----------------------------

\section{Examples with non-isomorphic isospectral graphs of groups}\label{sec:5}

In this section, we provide some computation results of zeta function for geometrically finite graphs of groups.
Especially, we give a proof of Proposition~\ref{prop:non-isom}.

For a positive integer $n$, let us denote by $C_n$ the cyclic group of order $n$. Consider
$G_0\subset G_1\subset G_2\subset \cdots$ be the infinite chain given by $G_0=C_k$ and $G_n\simeq  (C_q)^n$. Let $\mathbf{X}=(X,\mathcal{G})$ be the graph of groups described in Figure~\ref{fig:5}.

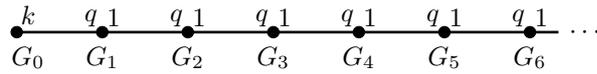
\begin{figure}[H]
\begin{center}
\begin{tikzpicture}[scale=0.75]
        %\draw[cyan,very thin] (-4,-4) grid (4,4);
    
        \draw[line width=1pt,black] (-1,0.75)--(8.5,0.75);

        \draw[fill] (-1,0.75) circle (0.1);      
        \draw[fill] (0.5,0.75) circle (0.1);   
        \draw[fill] (2,0.75) circle (0.1);   
        \draw[fill] (3.5,0.75) circle (0.1);   
        \draw[fill] (5,0.75) circle (0.1);  
        \draw[fill] (6.5,0.75) circle (0.1);  
        \draw[fill] (8,0.75) circle (0.1);

        \node at (9,0.75) {$\cdots$};
        
        \node at (-0.8,0.3) {$G_0$}; 
        \node at (0.5,0.3) {$G_1$}; 
        \node at (2,0.3) {$G_2$}; 
        \node at (3.5,0.3) {$G_3$}; 
        \node at (5,0.3) {$G_4$}; 
        \node at (6.5,0.3) {$G_5$};
        \node at (8,0.3) {$G_6$};
                \node at (-0.8,1.05) {$k$}; 
           \node at (0.3,1) {$q$}; 
        \node at (0.7,1) {$1$}; 
        \node at (1.8,1) {$q$}; 
           \node at (2.2,1) {$1$}; 
        \node at (3.3,1) {$q$}; 
         \node at (3.7,1) {$1$}; 
        \node at (4.8,1) {$q$}; 
            \node at (5.2,1) {$1$}; 
        \node at (6.3,1) {$q$};
         \node at (6.7,1) {$1$};
        \node at (7.8,1) {$q$};
                \node at (8.2,1) {$1$};
\end{tikzpicture}
\caption{Graph of groups $\mathbf{X}=(X,\mathcal{G})$}\label{fig:5}

\end{center}

\end{figure}

Let $\Gamma=\pi_1(X,\mathcal{G})$ be the fundamental group of the graph of groups $(X,\mathcal{G})$. We note that $\Gamma$ is isomorphic to $C_k *C_q^{\infty}$.

\vspace{1em}

\begin{lem}\label{lem:centrigid}
The group $\Gamma$ is centrally rigid with $c_\Gamma=1$.
\end{lem}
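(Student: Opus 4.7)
The plan is to read off the Bass-Serre structure of the universal cover $\mathcal{T}$ of $(X,\mathcal{G})$, and to prove that for every primitive hyperbolic element $\gamma\in\Gamma$ the pointwise stabilizer $\Gamma_{\textrm{Axis}(\gamma)}$ is already the trivial group; this immediately forces $|\textrm{Cent}(\gamma,\Gamma_{\textrm{Axis}(\gamma)})|=1$ uniformly on $P(\Gamma)$, i.e.\ central rigidity with $c_\Gamma=1$.

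First I would record the edge groups of $\mathbf{X}$. The edge between $G_0$ and $G_1$ carries indices $k$ and $q$, which equal the orders of $G_0$ and $G_1$, so its edge group is trivial. For $n\ge 1$, the edge between $G_n$ and $G_{n+1}$ has indices $1$ and $q$, so its edge group is $G_n$, embedded as the whole group on the $G_n$ side and as an index-$q$ subgroup on the $G_{n+1}$ side. As a consequence, every lift in $\mathcal{T}$ of the edge between $G_0$ and $G_1$ has trivial pointwise $\Gamma$-stabilizer, while lifts of the cuspidal edges $G_n\,\text{--}\,G_{n+1}$ have pointwise stabilizers conjugate to the nontrivial group $G_n$.

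The main step is a combinatorial observation on the subgraph $\mathcal{T}'\subset\mathcal{T}$ obtained by deleting every lift of $G_0$ together with the edges meeting it. A vertex lifting $G_1$ has $q$ neighbors lifting $G_0$ (now removed) and a single neighbor lifting $G_2$, so it is a leaf of $\mathcal{T}'$; a vertex lifting $G_n$ for $n\ge 2$ has $q$ neighbors lifting $G_{n-1}$ and exactly one neighbor lifting $G_{n+1}$. Thus at every non-leaf vertex there is a unique \emph{upward} direction (toward the $G_{n+1}$-type vertex) and several \emph{downward} directions that eventually terminate at $G_1$-leaves. In any non-backtracking bi-infinite path through $\mathcal{T}'$, the two directions leaving a vertex on the path must be distinct, so at most one of them can be the unique upward direction; the other must descend and must hit a leaf in finitely many steps, contradicting bi-infiniteness. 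Hence $\mathcal{T}'$ contains no bi-infinite geodesic.

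The conclusion is then immediate. The axis of a hyperbolic $\gamma\in\Gamma$ is a bi-infinite geodesic in $\mathcal{T}$, so by the previous paragraph it cannot lie inside $\mathcal{T}'$; it must cross some lift of $G_0$ and therefore contain at least one edge lifting the edge between $G_0$ and $G_1$. Since any such edge has trivial pointwise stabilizer, $\Gamma_{\textrm{Axis}(\gamma)}=\{1\}$ and hence $\textrm{Cent}(\gamma,\Gamma_{\textrm{Axis}(\gamma)})=\{1\}$. The main obstacle lies in the middle step: one must avoid treating $\mathcal{T}'$ as if it were a regular tree, since its cuspidal spine is genuinely one-sided and the side branches off it have only finite depth, which is precisely what prevents any bi-infinite geodesic from surviving once the compact part of $\mathbf{X}$ is removed.
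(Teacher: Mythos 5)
Your proof is correct and follows essentially the same route as the paper's: both rest on the observation that every bi-infinite geodesic in $\mathcal{T}$ must pass through a lift of the vertex carrying $G_0$, where the adjacent edge stabilizers are trivial, forcing the pointwise stabilizer of any axis to be trivial. The only real difference is that you actually prove that geodesic claim (via the one-sided structure of the cuspidal part of $\mathcal{T}$), whereas the paper merely asserts it and then concludes using normal forms in $C_k * C_q^{\infty}$.
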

\begin{proof}
Let $v_0,v_1,v_2,\ldots$ denote the vertices of the graph $X$. Every element $\delta$ of $\Gamma$ can be written as $\delta=a_1b_1a_2b_2\cdots a_nb_n$ for some $n\ge 1$ with $a_i\in C_k$, $b_i\in (C_q)^\infty$, allowing $a_1$ or $b_n$ may be the identity.

Consider the natural projection map $p\colon\mathcal{T}\to X$ from the universal covering tree $\mathcal{T}$ of $(X,\mathcal{G})$ to $X$. We note that for every bi-infinite geodesic in $\mathcal{T}$, there is at least one vertex $x$ such that $p(x)=v_0$.

Given a primitive hyperbolic element $\gamma$, choose a vertex $x$ in $\textrm{Axis}(\gamma)$ such that $p(x)=v_0$. If $\delta=a_1b_1\cdots a_nb_n\in\textrm{Cent}(\gamma,\Gamma_{\textrm{Axis}(\gamma)})$, then $b_j$ must be the identity for all $1\le i\le n$ since $\delta$ fixes the vertex $x$. Moreover, since $\delta$ fixes $\textrm{Axis}(\gamma)$ pointwise, it follows that $\delta$ itself is the identity.
\end{proof}
\bigskip

Hence, there is no essential difference between the Selberg zeta function for $\mathbf{X}$ and the Bass-Ihara zeta function of the weighted graph $\mathbf{A}$ associated to $\mathbf{X}$.

\bigskip

\begin{prop}
The Bass-Ihara zeta function for $\mathbf{A}=(X,w)$ associated to the above graph of groups is given by \[Z_\mathbf{A}(u)=\frac{1-qu^2}{1-(qk-k+1)u^2}.\]
\end{prop}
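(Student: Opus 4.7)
The plan is to compute $Z_{\mathbf{A}}(u)$ through the trace formula $\log Z_{\mathbf{A}}(u) = \sum_{m\ge 1} N_m u^m/m$, with $N_m = \textrm{Tr}(T^m)$ as in Equation~(\ref{eqn:Nm}) and the preceding Lemma quoting \cite{DK}; once $N_m$ is computed for every $m$, the identity follows by exponentiation.

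First I would read the edge indices off Figure~\ref{fig:5}: $w(e_0)=k$, $w(\bar{e}_0)=q$, and $w(e_n)=1$, $w(\bar{e}_n)=q$ for $n\ge 1$, which give $T(\bar{e}_0) = (k-1)e_0$, $T(\bar{e}_n) = q\bar{e}_{n-1}$ for $n\ge 1$, and $T(e_n) = e_{n+1} + (q-1)\bar{e}_n$ for $n\ge 0$. The decisive observation is that $w(e_n) - 1 = 0$ for every $n\ge 1$: this kills any ``down-then-up'' backtrack at a vertex $v_n$ with $n\ge 1$. Consequently every closed walk of length $2j$ contributing nontrivially to $N_{2j}$ is a cyclic concatenation of $r\ge 1$ excursions from $v_0$ of heights $p_1,\ldots,p_r\ge 1$ with $\sum p_i = j$, and a pair-by-pair count ($r$ peak turns of weight $q-1$, $r$ down-up turns at $v_0$ of weight $k-1$, $j-r$ down-down transitions of weight $q$, and $j-r$ trivial up-up transitions) shows that every such cyclic walk has weight $\big((q-1)(k-1)\big)^r q^{j-r}$, depending only on $r$ and $j$.

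To evaluate $N_{2j} = \textrm{Tr}(T^{2j})$ I would then classify linear walks (cyclic walks with a distinguished starting edge) by their first edge, either $e_h$ ($0\le h\le j-1$) or $\bar{e}_{H-1}$ ($1\le H\le j$), and describe each by a composition $(P, p_1,\ldots,p_{r-1})$ in which $P$ is the height of the first (partial) excursion. Setting $a = (q-1)(k-1)$ and $b = q$, the binomial theorem collapses the inner sum over compositions: the contribution from each $e_h$ comes out to $a b^h (a+b)^{j-h-1}$ and from each $\bar{e}_{H-1}$ to $a b^{H-1}(a+b)^{j-H}$. The two families of sums coincide after reindexing, and each evaluates by a finite geometric sum to $(a+b)^j - b^j$, yielding $N_{2j} = 2\big((a+b)^j - q^j\big) = 2\big((qk-k+1)^j - q^j\big)$. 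Substituting and recognizing $\sum_{j\ge 1} (N_{2j}/2j)\, u^{2j} = \log\frac{1-qu^2}{1-(qk-k+1)u^2}$ completes the proof. The main obstacle lies in the bookkeeping for walks starting with $\bar{e}_{H-1}$: the cyclic-junction weight admits two non-vanishing choices of last edge ($e_{H-1}$, giving a peak turn at $v_H$ of weight $q-1$, or $\bar{e}_H$, giving a down-down continuation of weight $q$), and combining these two sub-cases requires Pascal's identity $\binom{j-H-1}{r-1}+\binom{j-H-1}{r} = \binom{j-H}{r}$ to collapse into the clean single expression.
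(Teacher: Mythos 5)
Your proposal is correct, but it follows a genuinely different route from the paper. The paper works on the determinant side of the identity $Z_{\mathbf{A}}(u)=1/\det(I-uT)$: it writes down the (infinite) matrix of $I-uT$ in the edge basis, truncates, performs column operations following Lemma~4.2/Theorem~4.3 of \cite{DK}, and evaluates the limit of the finite determinants in closed form as $\frac{1-(qk-k+1)u^2}{1-qu^2}$. You instead work on the trace side, computing $N_{2j}=\textrm{Tr}(T^{2j})$ directly as a weighted count of closed walks and exponentiating. Your key structural observation --- that $w(e_n)-1=0$ for $n\ge 1$ forces every contributing closed walk to decompose into excursions from $v_0$, each of weight $(q-1)(k-1)q^{p-1}$ --- is sound, and I checked that your per-edge contributions $ab^h(a+b)^{j-h-1}$ telescope to $N_{2j}=2\bigl((qk-k+1)^j-q^j\bigr)$, which is exactly what the paper's determinant yields after taking $-\log$. (Odd traces vanish since the underlying graph is a tree.) The trade-off: the paper's linear-algebraic method is more mechanical and scales to the later, more complicated examples ($\mathbf{A}^{(n)}(\alpha_1,\ldots,\alpha_n)$ and the graphs $\mathbf{X}_N$ of Section~\ref{sec:6}) where a direct walk enumeration would become unwieldy; your combinatorial method is more transparent here and makes visible \emph{why} the answer depends only on $qk-k+1$, namely because each excursion contributes a fixed factor $a=(q-1)(k-1)$ regardless of its height. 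Both arguments lean on the same input from \cite{DK} (traceability of $T^m$ and the identity $Z_{\mathbf{A}}=\exp\bigl(\sum_m \textrm{Tr}(T^m)u^m/m\bigr)$), so no additional justification is needed on that front.
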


\begin{proof}Let $k$ be the vertex indexed by $G_k.$
Denote by $f_{2k-1}$ the edge from $v_{k-1}$ to $v_{k}$. Let $f_{2k}=\overline{f_{2k-1}}.$
The operator $T$ satisfies that
\begin{equation}\nonumber
Tf_k=\begin{cases}(k-1)f_1&\text{ if }i=2\\
                             (q-1)f_{k+1}+f_{k+2}&\text{ if } \text{ odd}\\
qf_{k-2} &\text{ if } k > 2 \text{ and even}.\end{cases}
\end{equation}
Let $a=-(q-1)u$ and $b=-u$. The determinant of $I-uT$ is the determinant of the following matrix
\begin{center}
\begin{tabular}{ c c| c c c c c c c c c c}
  $1$&$-(k-1)u$             &&&&&&&&&\\
              $a$ &   $1$       & &$a+b$& &       & & &&&\\
                \hline

              $b$&             &$1$&       & &      & & &&&\\
                 &             &$a$& $1$    & &$a+b$& & &&&\\
                  &             &$b$&       &$1$&      & & &&&\\
                  &             & &        &$a$&$1$    & &$a+b$&&\\
                  &             & &        &$b$&      &$1$& &&\\
                  &             & &        &&      &$a$&$1$&&&\\
                  &             & &        &&      &b&&&$\ddots$\\
\end{tabular}.
\end{center}
Following the proof of Lemma 4.3 in \cite{DK}, we have the determinant of $I-uT$ is equal to \[\det \begin{pmatrix}1 &-(k-1)u\\a&1\end{pmatrix}+\frac{a(a+b)}{1-(a+b)b}\det\begin{pmatrix}1&-(k-1)u\\b&0\end{pmatrix}.\]
Thus 
\begin{equation*}
\begin{split}
    \det(I-uT)&=1+a(k-1)u+\frac{ab(a+b)(k-1)u}{1-b(a+b)}\\
    &=\frac{1-b(a+b)+a(k-1)u}{1-b(a+b)}\\
    &=\frac{1-(qk-k+1)u^2}{1-qu^2}.
    \end{split}
\end{equation*}
This yields the proposition.
\end{proof}

Let $\alpha_i$ be the integers with $k:=\alpha_1+\cdots+\alpha_n\leq q+1.$ Let $\mathbf{A}^{(n)}(\alpha_1,\ldots,\alpha_n)$ be a weighted graph arising from a geometrically finite lattice which consists of $n$-rays connected at a base point $v_0$ with indices $\alpha_1,\ldots,\alpha_n$. See Figure~\ref{fig:6} for $\mathbf{A}^{(3)}(\alpha_1,\alpha_2,\alpha_3)$. 

\bigskip

\begin{prop}\label{prop:ex2}
If $\alpha_1+\alpha_2+\cdots+\alpha_n=k$, then the Bass-Ihara zeta function of $\mathbf{A}^{(n)}(\alpha_1,\ldots,\alpha_n)$ is given by
\[Z_\mathbf{A}(u)=\frac{(1-qu^2)^n}{(1-u)^{n-1}(1+u)^{n-1}(1-(kq-k+1)u^2)}.\]
\end{prop}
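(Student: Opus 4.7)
The plan is to compute $\det(I-uT)$ by adapting the Schur-complement argument from the proof of the preceding proposition, and then invert to obtain $Z_{\mathbf{A}}(u)$. For each ray $j\in\{1,\ldots,n\}$, let $g_i^{(j)}$ denote the forward edges emanating from the base vertex $v_0$ and $h_i^{(j)}=\overline{g_i^{(j)}}$ their reverses, and retain the abbreviations $a=-(q-1)u$, $b=-u$. Away from $v_0$, each ray has the same edge-adjacency structure as in the single-ray case; all of the new combinatorial information sits at $v_0$ and is captured by $Th_1^{(j)}=\sum_i \alpha_i g_1^{(i)}-g_1^{(j)}$.

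The first step is to partition $M:=I-uT$ into a $2n\times 2n$ \emph{base block} $A$ indexed by $\{g_1^{(j)},h_1^{(j)}\}_{j=1}^n$ together with the block-diagonal tail $D=\textrm{diag}(D_1,\ldots,D_n)$, where each $D_j$ is the same infinite tail matrix that was handled in the proof of the preceding proposition and hence satisfies $\det(D_j)=1$. The coupling matrices $B$ and $C$ carry only a single nonzero entry per ray, so the same geometric-series computation that produced the factor $\tfrac{a(a+b)}{1-(a+b)b}$ for $n=1$ shows that $A-BD^{-1}C$ differs from $A$ only in replacing the entry $a$ at each position $(h_1^{(j)},g_1^{(j)})$ by $\tilde a := \tfrac{a}{1-(a+b)b} = \tfrac{-(q-1)u}{1-qu^2}$.

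After grouping the $g_1^{(j)}$-rows and columns first and the $h_1^{(j)}$-rows and columns second, the resulting $2n\times 2n$ matrix assumes the block form
\[
M' = \begin{pmatrix} I_n & u\,(I_n-\alpha\mathbf{1}^T) \\ \tilde a\,I_n & I_n \end{pmatrix}, \qquad \alpha=(\alpha_1,\ldots,\alpha_n)^T.
\]
A further Schur complement reduces $\det(M')$ to that of a rank-one perturbation of a scalar matrix, which the matrix determinant lemma together with the identity $\mathbf{1}^T\alpha=k$ evaluates to $(1-u\tilde a)^{n-1}\bigl(1+(k-1)u\tilde a\bigr)$. Substituting the value of $\tilde a$ simplifies the two factors to $\tfrac{1-u^2}{1-qu^2}$ and $\tfrac{1-(kq-k+1)u^2}{1-qu^2}$, so that $\det(I-uT)=\tfrac{(1-u^2)^{n-1}(1-(kq-k+1)u^2)}{(1-qu^2)^n}$, whose reciprocal is the claimed expression for $Z_{\mathbf{A}}(u)$.

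The main obstacle I anticipate is the careful justification of the Schur complement on the infinite tail $D$: the previous proposition sidestepped matrix inversion in favor of an explicit expansion argument, and one has to verify that the analogous geometric-series summation goes through block-by-block on $D=\textrm{diag}(D_1,\ldots,D_n)$. Once the reduction to the finite matrix $M'$ is established, the rest is routine linear algebra, with the essential structural input being that the base-vertex coupling has rank one; this is precisely what produces the factor $(1-u^2)^{n-1}$, reflecting the $(n-1)$-dimensional subspace of zero-mean fluctuations across the rays that the base vertex cannot see.
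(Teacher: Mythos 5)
Your proposal is correct and follows essentially the same route as the paper: the paper likewise folds each infinite tail into the single entry $\tilde a=\alpha=\frac{a}{1-b(a+b)}$ of a $2n\times 2n$ base block (via explicit column operations on finite truncations $M_N$ rather than a Schur complement), and then evaluates that block's determinant as $(1-\alpha u)^{n-1}\bigl(1+(k-1)\alpha u\bigr)$ by reducing the rank-one coupling at $v_0$ to upper-triangular form. Your use of the matrix determinant lemma in place of the paper's final row operations, and of Schur-complement language in place of the truncation-and-column-operation argument, are only cosmetic differences.
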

\begin{proof}

Let $v_{mn+r}$ be the point on $r$-th ray with $d(v_0,v_{mn+r})=m+1.$ The edges $e_{2r-1}$ are the edges from $v_0$ to $v_r$ and $e_{2(m+1)n+2r-1}$ from $v_{mn+r}$ to $v_{(m+1)n+r}$. Let $\overline{e_{2i}}=e_{2i+1}.$

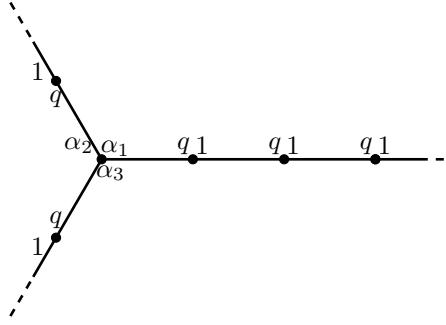
\begin{figure}[h]
\begin{center}
\begin{tikzpicture}[scale=0.6]
        %\draw[cyan,very thin] (-4,-4) grid (4,4);
    
        \draw[line width=1pt,black] (-1,0.0)--(6,0.0);
        \draw[line width=1pt,black,dashed] (6.5,0.0)--(6,0.0);
     \draw[line width=1pt,black] (-1,0.0)--(-2.5,{sqrt(6.75)});
          \draw[line width=1pt,black,dashed] (-3,{sqrt(12)})--(-2.5,{sqrt(6.75)});
       \draw[line width=1pt,black,dashed] (-3,-{sqrt(12)})--(-2.5,-{sqrt(6.75)});
         \draw[line width=1pt,black] (-1,0.0)--(-2.5,-{sqrt(6.75)});   
        \draw[fill] (-2,+{sqrt(3)}) circle (0.1);      
        \draw[fill] (-2,-{sqrt(3)}) circle (0.1);      
        \draw[fill] (-1,0.0) circle (0.1);      
        \draw[fill] (1,0.0) circle (0.1);   
        \draw[fill] (3,0.0) circle (0.1);   
        \draw[fill] (5,0.0) circle (0.1);   

        \draw[fill] (5,0.0) circle (0.1);  

        \node at (-0.7,0.25) {$\alpha_1$}; 
        \node at (-1.5,0.3) {$\alpha_2$}; 
        \node at (-0.8,-0.3) {$\alpha_3$}; 
        \node at (0.8,0.3) {$q$}; 
        \node at (-2.4,+{sqrt(3)}+0.2) {$1$}; 
        \node at (-2.0,+{sqrt(3)}-0.4) {$q$}; 
        \node at (-2,-{sqrt(3)}+0.4) {$q$}; 
        \node at (-2.4,-{sqrt(3)}-0.2) {$1$}; 
        \node at (1.2,0.3) {$1$}; 
     \node at (2.8,0.3) {$q$}; 
        \node at (3.2,0.3) {$1$}; 
         \node at (4.8,0.3) {$q$}; 
        \node at (5.2,0.3) {$1$}; 
\end{tikzpicture}

\caption{Weighted graph $\mathbf{A}^{(3)}(\alpha_1,\alpha_2,\alpha_3)$}\label{fig:6}
\end{center}
\end{figure}

The operator $T$ satisfies that
\begin{equation}\nonumber
Te_i=\begin{cases}\displaystyle\sum_{j=1}^n\alpha_je_{2j-1}-e_{i-1}&\text{ if }i=2r\text{ and } 1\leq r\leq n\\
                             (q-1)e_{i+1}+e_{i+2n}&\text{ if } \text{ odd}\\
qe_{i-2n} &\text{ if } i > 2n \text{ and even}.\end{cases}
\end{equation}
 Let $a=-(q-1)u$, $b=-u$ and $a_n=a\sum_{k=0}^{n-1}b^k(a+b)^k$. Let $\delta_{st}$ be the Kronecker delta function.
Let $A_{n,k}$, $B_n$, $C_n$, $D_{n,k}$ and $E_{n,k}$ be the $2n\times 2n$ matrices defined by
\begin{equation*}
\begin{split}
&(A_{n,k})_{ij}=\begin{cases}1&\text{ if } i=j\\
   a_k&\text{ if } (i,j)=(2s,2s-1)\\
   -(\alpha_s-\delta_{st})u&\text{ if } (i,j)=(2s-1,2t)\\
   0&\text{ otherwise}
\end{cases}\\
&(B_{n})_{ij}=\begin{cases}
    b &\text{ if } (i,j)=(2s-1,2s-1)\\
    0 &\text{ otherwise},
\end{cases}\quad
(C_{n})_{ij}=\begin{cases}
    a+b &\text{ if } (i,j)=(2s,2s)\\
    0 &\text{ otherwise},\\
\end{cases}\\
  &(D_{n,k})_{ij}=\begin{cases}  1 &\text{ if } i=j\\
  a_k&\text{ if } (i,j)=(2s,2s-1)\\
    0 &\text{otherwise}
\end{cases}\\
&\text{and}\\
&(E_{n,k})_{ij}=\begin{cases}  a+b &\text{ if } (i,j)=(2s,2s)\\
  -a_k(a+b)&\text{ if } (i,j)=(2s,2s-1)\\
    0 &\text{otherwise}.
    \end{cases}
\end{split}
\end{equation*}
By Theorem 4.3 in \cite{DK}, the determinant $I-uT$ is the limit of the determinant of the $N\times N$ block matrix
\begin{equation*}
    (M_N)_{ij}=\begin{cases}
    A_{n,1}& \text{ if } i=j=1\\
        B_n&\text{ if } i-j=1\\
    C_{n}&\text{ if } i-j=-1\\
    D_{n,1}&\text{ if } i=j\neq 1\\  
    0&\text{ otherwise}.
    \end{cases}
\end{equation*}
We employ the column operations to calculate the determinant of $M_N$. 
Denote by $[M_N]_j$ the $j$-th column of $M_N$ and by $(M_N)_{m,n}$ the matrix in $(m,n)$-th entry of the block matrix $M_N$. We execute Step 1 and Step 2 sequentially for $i$ ranging from $1$ to $N-1$.
\begin{itemize}
\item  \textit{Step 1} To change $(M_N)_{N-i+1,N-i+1}$ to $I$ and to change $(M_N)_{N-i,N-i+1}$ to $E_{n,i}$, we perform that for any positive integer $s\leq n$,
\begin{equation*}\label{eq:4.1}
\begin{split} 
[M_N]_{k(N-i)+2s-1}\longrightarrow[M_N]_{k(N-i)+2s-1}-a_i[M_N]_{k(N-i)+2s}.
\end{split}
\end{equation*}
\item \textit{Step 2} To change $(M_{N})_{N-i+1,N-i}$ to the zero matrix and to change $(M_{N})_{N-i,N-i}$ to $D_{n,i+1}$ or $A_{n,N}$, we perform that for any positive integer $s\leq n$,
\begin{equation*}\label{eq:4.2}
[M_N]_{k(N-i-1)+2s-1}\longrightarrow[M_N]_{k(N-i-1)+2s-1}-b[M_N]_{k(N-i)+2s-1}.
\end{equation*}
\end{itemize}

Let $A_n=\lim_{N\rightarrow \infty} A_{n,N}=$ and $\alpha:=\lim_{k\rightarrow \infty}a_k$. Then
\begin{equation*}
    (A_n)_{ij}
=\begin{cases}
\alpha&\text{ if } (i,j)=(2s,2s-1)\\
    (A_{n,k})_{ij} &\text {otherwise}.
\end{cases}\end{equation*}
    As a result of the operation, we have 
\begin{equation*}
\begin{split}
    \det(I-uT)&=\lim_{N\rightarrow\infty}\det(M_N)= \det(A_{n}).
    \end{split}
\end{equation*}
The remaining part of the proof is to show that 
$$\det(A_{n})=\frac{(1-u)^{n-1}(1+u)^{n-1}(1-(kq-k+1)u^2)}{(1-qu^2)^n}.$$
Let $[A]_i$ and $[A]^j$ be the $i$-th row and $j$-th column of a matrix $A$. To prove the above equality, we perform the following operations.
\begin{itemize}
    \item \underline{Step 1} Let $F_1$ be the matrix obtained by the following operation: for any $i$, 
\begin{equation*}
[A_n]_{2i-1}\longrightarrow [A_n]_{2i-1}-u[A_n]_{2i}.
\end{equation*}
\item \underline{Step 2} Let $F_2$ be the matrix obtained by the following operation: for any $j$
\begin{equation*}
    [F_1]^{2j}\longrightarrow[F_1]^{2j}-[F_1]^{2j+2}.
\end{equation*}
\item \underline{Step 3} Let $F_3$ be the matrix obtained by the following operation: for any $i$
\begin{equation*}
    [F_2]_{2i}\longrightarrow [F_2]_{2i}-\frac{\alpha}{1-\alpha u}[F_2]_{2i-1}.
\end{equation*}
\item \underline{Step 4} Let $F_4$ be the matrix obtained by the following operation: for any $i$
\begin{equation*}
    [F_3]_{2i+2}\longrightarrow [F_3]_{2i+2}+[F_3]_{2i}.
\end{equation*}
\end{itemize}
The matrix $F_4$ is an upper triangular matrix with diagonal entries satisfying 
\begin{equation*}
    (F_4)_{ii}=\begin{cases}
        1-\alpha u &\text{ if } i=2s-1\\
        1 &\text{ if } i=2s\text{ and } s\neq n\\
        1+\dfrac{k\alpha}{1-\alpha u} &\text{ if } s=2n.
    \end{cases}
\end{equation*}
Since $\alpha =\frac{a}{1-(a+b)b}=\frac{-(q-1)u}{1-qu^2},$ it follows that
\begin{equation*}
\begin{split}
\det(A_{n})&=(1-\alpha u)^n\biggl( 1+\frac{k\alpha}{1-\alpha u}\biggr)\\
&=\frac{(1-u)^{n-1}(1+u)^{n-1}(1-(kq-k+1)u^2)}{(1-qu^2)^n}.
\end{split}
\end{equation*}
This completes the proof.
\end{proof}

\bigskip

\begin{coro}

For every $q\ge 3$, there are infinitely many pairs of $(q+1)$-regular graphs of groups whose induced weighted graphs are non-isomorphic but the Bass-Ihara zeta functions are identical.
\end{coro}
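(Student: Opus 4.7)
The plan is to deduce the corollary directly from Proposition~\ref{prop:ex2}. That result furnishes the closed form
\[
Z_{\mathbf{A}^{(n)}(\alpha_1,\ldots,\alpha_n)}(u)=\frac{(1-qu^2)^n}{(1-u)^{n-1}(1+u)^{n-1}(1-(kq-k+1)u^2)},
\]
which depends on the tuple $(\alpha_1,\ldots,\alpha_n)$ only through its length $n$ and its sum $k$. Consequently, any two partitions of $k$ into $n$ positive parts yielding distinct multisets produce non-isomorphic weighted graphs whose Bass-Ihara zeta functions coincide.

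First I would verify the existence of a basic pair for every $q\ge 3$. Since $q+1\ge 4$, the partitions $(1,3)$ and $(2,2)$ are both admissible (with $n=2$ and $k=4\le q+1$), so $\mathbf{A}^{(2)}(1,3)$ and $\mathbf{A}^{(2)}(2,2)$ are non-isomorphic as weighted graphs (their edge-index multisets at the base vertex differ) while sharing the same zeta function by Proposition~\ref{prop:ex2}.

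To upgrade this basic pair into an infinite family for each fixed $q\ge 3$, I would perform a parallel enlargement of both graphs: for each positive integer $m$, attach an identical auxiliary substructure (for instance, a further star of rays with matching indices, rooted at a chosen deep vertex) simultaneously to both $\mathbf{A}^{(2)}(1,3)$ and $\mathbf{A}^{(2)}(2,2)$. The resulting weighted graphs $\mathbf{A}_1^{(m)}$ and $\mathbf{A}_2^{(m)}$ remain non-isomorphic because the local index data at the original base vertex is untouched, while their zeta functions still coincide because the extra structure is identical on both sides and should contribute the same multiplicative factor to $\det(I-uT)$.

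The principal obstacle is the block-determinant computation needed to verify that the auxiliary structure indeed contributes the same factor on both sides, regardless of the partition chosen at the base vertex. This should follow from an extension of the column-reduction argument in the proof of Proposition~\ref{prop:ex2}: one exhibits the edge-adjacency operator $T$ of the enlarged graph in a block form in which the auxiliary block is independent of the partition, and then applies the same row/column operations used there to factor the determinant. Once this factorization is established, letting $m$ range over positive integers produces the desired infinite family of non-isomorphic isospectral pairs.
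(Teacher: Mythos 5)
Your core step is exactly the paper's: Proposition~\ref{prop:ex2} shows the zeta function of $\mathbf{A}^{(n)}(\alpha_1,\ldots,\alpha_n)$ depends only on $n$ and $k=\sum\alpha_i$, so two partitions of the same $k$ into the same number of parts give non-isomorphic weighted graphs with equal zeta functions. Your basic pair $\mathbf{A}^{(2)}(1,3)$ versus $\mathbf{A}^{(2)}(2,2)$ is in fact cleaner than the pair printed in the paper, $\mathbf{A}^{(2)}(q,1)$ versus $\mathbf{A}^{(2)}(q-2,2)$, whose index sums are $q+1$ and $q$ respectively and which therefore do \emph{not} have equal zeta functions by the very formula being invoked; your choice has $n=2$, $k=4\le q+1$ on both sides and is correct for every $q\ge 3$.

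The gap is in your upgrade to infinitely many pairs. The ``parallel enlargement'' by rooting an auxiliary star of rays at a deep vertex is not compatible with $(q+1)$-regularity: every vertex on a cuspidal ray of $\mathbf{A}^{(n)}(\alpha_1,\ldots,\alpha_n)$ already carries edge indices summing to $q+1$ (index $q$ outward, index $1$ inward), so there is no room to attach further edges there without changing the local index data, hence the weighted graph and, a priori, the zeta function. The block-determinant factorization you defer to is therefore not merely unverified but is being asked of a construction that does not exist in this category. Note also that for fixed $q$ only finitely many edge-indexed graphs of the form $\mathbf{A}^{(n)}(\alpha_1,\ldots,\alpha_n)$ occur (since $k\le q+1$), so no amount of repartitioning alone yields infinitely many pairs of weighted graphs. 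The statement, however, asks for infinitely many pairs of \emph{graphs of groups} with non-isomorphic induced weighted graphs; the direct route is to fix your one good pair of weighted graphs and observe that each admits infinitely many non-isomorphic finite groupings (e.g.\ replacing every vertex and edge group $G$ by $G\times H$ for an arbitrary finite group $H$ preserves all indices $w(e)=|G_{s(e)}|/|G_e|$ and hence the zeta function), which is presumably what the paper's terse proof intends.
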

\begin{proof}
By Proposition~\ref{prop:ex2}, the zeta functions of $\mathbf{A}^{(n)}(\alpha_1,\ldots,\alpha_n)$ depends only on the number of cusps $n$ and the sum of $\alpha_i$'s. Hence, for example,
\[\mathbf{A}^{(2)}(q,1)\qquad\textrm{ and }\qquad \mathbf{A}^{(2)}(q-2,2)\]
have the same zeta functions for every $q\ge 3$.
\end{proof}

\bigskip

%%%%%---------------------------------------
%%%%%---------------------------------------

\section{Sequence $(X_N,\mathcal{G}_N)$ of graphs of groups towards small pole-free region}\label{sec:6}

In this section, we prove Theorem~\ref{thm:pole-free}. Recall that a pole-free region of $Z_\mathbf{X}(u)$ is the region $\{u\in\mathbb{C}\colon |u|<R_\mathbf{X}+\epsilon\}$ where any pole of $Z_\mathbf{X}(u)$ in the region $R_\mathbf{X}\le |u|\le R_\mathbf{X}+\epsilon$ must lie on the circle $|u|=R_\mathbf{X}$.

Let us denote by $C_n$ the cyclic group of order $n$. We consider the filtration
\[H_N\supset H_{N-1}\supset\cdots\supset H_1\supset H_0\]
with $H_k\simeq C_{q-1}\times C_q^k$.
Similarly, let 
$G_1\subset G_2\subset \cdots$ be the infinite chain given by $G_k\simeq (C_q)^k$. Let $(X_N,\mathcal{G}_N)$ be the graph of groups defined as Figure~\ref{fig:1}.

\begin{center}
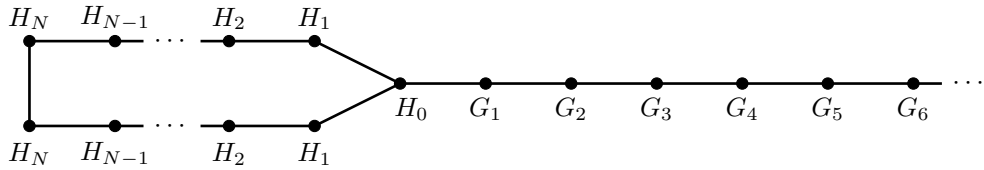
\begin{figure}[h]
\begin{tikzpicture}[scale=0.75]
        %\draw[cyan,very thin] (-4,-4) grid (4,4);
        \draw[line width=1pt,black] (-1,0.75)--(-2.5,0) -- (-4.5,0);
        \draw[line width=1pt,black] (-4.5,1.5)--(-4,1.5)--(-2.5,1.5)--(-1,0.75)--(8.5,0.75);
        \draw[line width=1pt,black]
        (-5.5,1.5)--(-7.5,1.5)--(-7.5,0)--(-5.5,0);
            
        \draw[fill] (-4,0) circle (0.1);  
        \draw[fill] (-2.5,0) circle (0.1);  
        \draw[fill] (-2.5,1.5) circle (0.1);  
        \draw[fill] (-4,1.5) circle (0.1);
        \draw[fill] (-1,0.75) circle (0.1);      
        \draw[fill] (0.5,0.75) circle (0.1);   
        \draw[fill] (2,0.75) circle (0.1);   
        \draw[fill] (3.5,0.75) circle (0.1);   
        \draw[fill] (5,0.75) circle (0.1);  
        \draw[fill] (6.5,0.75) circle (0.1);  
        \draw[fill] (8,0.75) circle (0.1); 
        \draw[fill] (-7.5,0) circle (0.1);
        \draw[fill] (-7.5,1.5) circle (0.1);
        \draw[fill] (-6,0) circle (0.1);
        \draw[fill] (-6,1.5) circle (0.1);
        
        \node at (-5,0) {$\cdots$};
        \node at (-5,1.5) {$\cdots$};
        \node at (9,0.75) {$\cdots$};
        
        \node at (-0.8,0.3) {$H_0$}; 
        \node at (0.5,0.3) {$G_1$}; 
        \node at (2,0.3) {$G_2$}; 
        \node at (3.5,0.3) {$G_3$}; 
        \node at (5,0.3) {$G_4$}; 
        \node at (6.5,0.3) {$G_5$};
        \node at (8,0.3) {$G_6$};
       
        \node at (-4,1.9) {$H_2$};
        \node at (-2.5,1.9) {$H_1$}; 
        \node at (-7.5,1.9)  {$H_N$};
        \node at (-7.5,-0.5) {$H_N$};
        \node at (-4,-0.5) {$H_2$};
        \node at (-2.5,-0.5) {$H_1$}; 
        \node at (-6,1.9) {$H_{N-1}$};
        \node at (-6,-0.5) {$H_{N-1}$};
\end{tikzpicture}
\caption{Graph of groups $(X_N,\mathcal{G}_N)$}\label{fig:1}
\end{figure}
\end{center}

Similar arguments with Lemma~\ref{lem:centrigid} yields that $\Gamma=\pi_1(X_N,\mathcal{G}_N)$ is a centrally rigid subgroup with $c_\Gamma=1$. The weighted graph associated to $(X_N,\mathcal{G}_N)$ is described in Figure \ref{fig:2}. Now, let us describe the operator $T\colon Ed(X_N)\to Ed(X_N)$.

To facilitate the description, we introduce the following notation for the vertices in $(X_N,\mathcal{G}_N)$ as shown in Figure \ref{fig:3}: $a_k$ and $b_k$ are $k$-th vertices in upper and lower layer, respectively, and $c_k$ is the $k$-th vertex in the central layer. We denote by $e_1$ the oriented edge from $a_1$ to $c_1$, and $e_{2N+1}$ the oriented edge from $b_N$ to $a_N$. Then, we define $e_2=\overline{e_1}$ and $e_{2N+2}=\overline{e_{2N+1}}$.

For $1\leq k \leq N-1$, we let $e_{2k+1}$ be the oriented edge from $a_{k+1}$ to $a_k$, and $e_{2k+2}=\overline{e_{2k+1}}$. For $N+1\leq k \leq 2N-1$, we let $e_{2k+1}$ be the oriented edge from $b_{2N-k}$ to $b_{2N-k+1}$. We denote by $e_{4N+1}$ the oriented edge from $c_1$ to $b_1$, and $e_{4N+1}=\overline{e_{4N+2}}$.

Finally, we denote by $f_{2k-1}$ the edge from $c_k$ to $c_{k+1}$, and $f_{2k-1}=\overline{f_{2k}}$.

%Denote by $e_1$ the oriented edge from $a_1$ to $c_1$ and denote by $e_{2n+1}$ the oriented edge from $b_n$ to $a_n$. Let $e_2=\overline{e_1}$ and $e_{2n+2}=\overline{e_{2n+1}}$. For $1\leq k \leq n-1$, let $e_{2k+1}$ be the oriented edge from $a_{k+1}$ to $a_k$ and let $e_{2k+2}=\overline{e_{2k+1}}.$ For $n+1\leq k \leq 2n-1$, let $e_{2k+1}$ be the oriented edge from $b_{2n-k}$ to $b_{2n-k+1}$. Denote by $e_{4n+1}$ the oriented edge from $c_1$ to $b_1$. Let $e_{4n+1}=\overline{e_{4n+2}}.$ Denote by $f_{2k-1}$ the edge from $c_k$ to $c_{k+1}$. Let $f_{2k-1}=\overline{f_{2k}}.$

The operator $T$ satisfies that
\begin{equation}\nonumber
\begin{split}
&Te_1=e_{4N+1}+(q-1)f_1,\quad Te_{4N+2}=e_2+(q-1)f_1\\
&Te_{2k+1}=\begin{cases}qe_{2k-1} &\text{ if } 1\leq k \leq N\\
                                         e_{2k-1}+(q-1)e_{2k+2} &\text{ if }\quad N+1\leq k\leq 2N
\end{cases}\\
&Te_{2k+2}=\begin{cases}(q-1)e_{2k+1}+e_{2k+4} &\text{ if } 0\leq k\leq N-1\\
                                          qe_{2k+4}      & \text{ if } N\leq k\leq 2N-1
\end{cases}\\
&Tf_k=\begin{cases}e_2+e_{4N+1}+(q-2)f_1&\text{ if }i=2\\
                             (q-1)f_{k+1}+f_{k+2}&\text{ if } \text{ odd}\\
qf_{k-2} &\text{ if } k > 2 \text{ and even}.\end{cases}
\end{split}
\end{equation}

\begin{center}
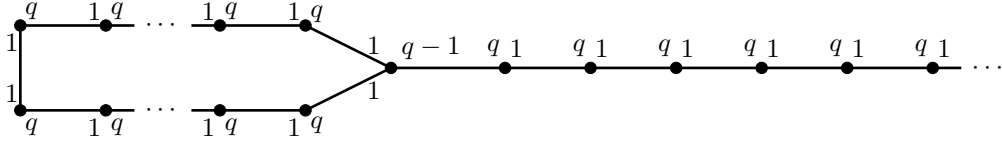
\begin{figure}[h]
\begin{tikzpicture}[scale=0.75]
        %\draw[cyan,very thin] (-4,-4) grid (4,4);
        \draw[line width=1pt,black] (-1,0.75)--(-2.5,0) -- (-4.5,0);
        \draw[line width=1pt,black] (-4.5,1.5)--(-4,1.5)--(-2.5,1.5)--(-1,0.75)--(9,0.75);
        \draw[line width=1pt,black]
        (-5.5,1.5)--(-7.5,1.5)--(-7.5,0)--(-5.5,0);
            
        \draw[fill] (-4,0) circle (0.1);  
        \draw[fill] (-2.5,0) circle (0.1);  
        \draw[fill] (-2.5,1.5) circle (0.1);  
        \draw[fill] (-4,1.5) circle (0.1);
        \draw[fill] (-1,0.75) circle (0.1);      
        \draw[fill] (1,0.75) circle (0.1);   
        \draw[fill] (2.5,0.75) circle (0.1);   
        \draw[fill] (4,0.75) circle (0.1);   
        \draw[fill] (5.5,0.75) circle (0.1);  
        \draw[fill] (7,0.75) circle (0.1);  
        \draw[fill] (8.5,0.75) circle (0.1); 
        \draw[fill] (-7.5,0) circle (0.1);
        \draw[fill] (-7.5,1.5) circle (0.1);
        \draw[fill] (-6,0) circle (0.1);
        \draw[fill] (-6,1.5) circle (0.1);
        
        \node at (-5,0) {$\cdots$};
        \node at (-5,1.5) {$\cdots$};
        \node at (9.5,0.75) {$\cdots$};
       
        \node at (-7.3,-0.3) {$q$};\node at (-6.2,-0.3) {$1$};\node at (-7.65,0.3) {$1$};\node at(-7.65,1.2) {$1$};\node at (-6.2,1.75) {$1$};\node at(-7.3,1.75) {$q$};\node at (-5.8,-0.3) {$q$};\node at (-5.8,1.75) {$q$};\node at(-4.2,1.75) {$1$};\node at(-4.2,-0.3) {$1$};\node at(-3.8,-0.3) {$q$};\node at(-3.8,1.75) {$q$};\node at(-2.7,1.75) {$1$};\node at(-2.7,-0.3) {$1$};\node at(-2.3,1.7) {$q$};\node at(-2.3,-0.25) {$q$};\node at(-1.3,0.35) {$1$};\node at (-1.3,1.15) {$1$}; \node at (-0.3,1.1) {$q-1$};\node at (0.8,1.1) {$q$};\node at (1.2,1.1) {$1$};\node at (2.3,1.1) {$q$};\node at (2.7,1.1) {$1$};\node at (3.8,1.1) {$q$};\node at (4.2,1.1) {$1$};\node at (5.3,1.11) {$q$};\node at (5.7,1.1) {$1$};\node at (6.8,1.1) {$q$};\node at (7.2,1.1) {$1$};\node at (8.3,1.1) {$q$};\node at (8.7,1.1) {$1$};
        \node at (-4.2,1.9) {};
        \node at (-2.2,1.9) {};         
     
\end{tikzpicture}
\caption{Weighted graph associated to $(X_N,\mathcal{G}_N)$}\label{fig:2}
\end{figure}
\end{center}

\begin{center}
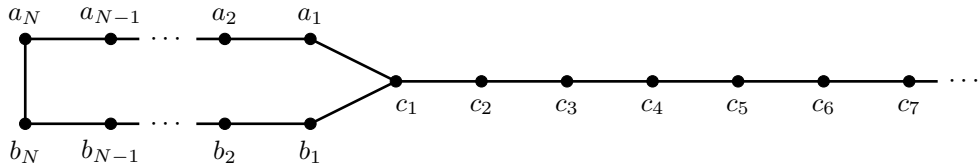
\begin{figure}[h]
\begin{tikzpicture}[scale=0.75]
        %\draw[cyan,very thin] (-4,-4) grid (4,4);
        \draw[line width=1pt,black] (-1,0.75)--(-2.5,0) -- (-4.5,0);
        \draw[line width=1pt,black] (-4.5,1.5)--(-4,1.5)--(-2.5,1.5)--(-1,0.75)--(8.5,0.75);
        \draw[line width=1pt,black]
        (-5.5,1.5)--(-7.5,1.5)--(-7.5,0)--(-5.5,0);
            
        \draw[fill] (-4,0) circle (0.1);  
        \draw[fill] (-2.5,0) circle (0.1);  
        \draw[fill] (-2.5,1.5) circle (0.1);  
        \draw[fill] (-4,1.5) circle (0.1);
        \draw[fill] (-1,0.75) circle (0.1);      
        \draw[fill] (0.5,0.75) circle (0.1);   
        \draw[fill] (2,0.75) circle (0.1);   
        \draw[fill] (3.5,0.75) circle (0.1);   
        \draw[fill] (5,0.75) circle (0.1);  
        \draw[fill] (6.5,0.75) circle (0.1);  
        \draw[fill] (8,0.75) circle (0.1); 
        \draw[fill] (-7.5,0) circle (0.1);
        \draw[fill] (-7.5,1.5) circle (0.1);
        \draw[fill] (-6,0) circle (0.1);
        \draw[fill] (-6,1.5) circle (0.1);
        
        \node at (-5,0) {$\cdots$};
        \node at (-5,1.5) {$\cdots$};
        \node at (9,0.75) {$\cdots$};
        
        \node at (-0.8,0.3) {$c_1$}; 
        \node at (0.5,0.3) {$c_2$}; 
        \node at (2,0.3) {$c_3$}; 
        \node at (3.5,0.3) {$c_4$}; 
        \node at (5,0.3) {$c_5$}; 
        \node at (6.5,0.3) {$c_6$};
        \node at (8,0.3) {$c_7$};
       
        \node at (-4,1.9) {$a_2$};
        \node at (-2.5,1.9) {$a_1$}; 
        \node at (-7.5,1.9)  {$a_{N}$};
        \node at (-7.5,-0.5) {$b_N$};
        \node at (-4,-0.5) {$b_2$};
        \node at (-2.5,-0.5) {$b_1$}; 
        \node at (-6,1.9) {$a_{N-1}$};
        \node at (-6,-0.5) {$b_{N-1}$};
\end{tikzpicture}
\caption{Labeling vertices}\label{fig:3}
\end{figure}
\end{center}

Let $a=-(q-1)u$ and $b=-u$. Denote by $A_N$ the matrix representation of $I-uT$ on the subgraph consisting of edges $e_1,e_2,\dots,e_{4N+2}$, $f_1$ and $f_2$. The number $i$ corresponds to $e_{i}$ when $1 \leq i \leq 4N+2$, and $4N+3$ and $4N+4$ correspond to $f_1$ and $f_2$, respectively. The above equation for $T$ shows that if the $(i,j)$-entry of $A_n$ is nonzero and $i\neq j$, then $(i,j)$ is in the one of the following sets
\begin{equation}\nonumber
\begin{split}
I_1&:=\{(2k-1,2k):1\leq k \leq N\}\cup\{(2k,2k-1):N+2\leq k \leq 2N+1\}\\
&\quad\quad\cup\{(4N+3,1),(4N+3,4N+2),(4N+4,4N+3)\}\\
I_2&:=\{(2k+2,2k): 1\leq k \leq N\}\cup\{(2k+1,2k+3):N\leq k \leq 2N-1\}\\
&\quad\quad\{(2,4N+2),(2,4N+4),(4N+1,1),(4N+1,4N+4)\}\\
I_3&:=\{(2k+1,2k+3):0\leq k\leq N-1\}\cup\{(2k+2,2k):N+1\leq k\leq 2N\}\\
I_4&:=\{(4N+3,4N+4)\}.
\end{split}
\end{equation}
Then the entries $(A_N)_{ij}$ of $A_N$ satisfies that
\begin{equation}\nonumber
(A_N)_{ij}=\begin{cases}1 &\text{ if } i=j\\
                                a &\text{ if } (i,j)\in I_1\\
                                b &\text{ if } (i,j)\in I_2\\
                                a+b &\text{ if } (i,j)\in I_3\\
                                a-b&\text{ if }(i,j)= (4k+3,4k+4)\\
                                0 &\text{ otherwise}.
\end{cases}
\end{equation}
%For example, we have $A_2$ as follows:
%$$A_2=\begin{pmatrix}\begin{matrix}
%1&a&a+b& &        &      & &       & &  \\
%  &1&      & &        &      & &       & &b\\
%  &  & 1   &a&a+b &      & &       & &  \\
%  &b&      &1&       &      & &       & &  \\
%  &  &      &  & 1    &      &b&      & &  \\
%  &  &      & b&      &1    & &       & &  \\
%  &  &      &  &       &     &1&       &b& \\
%  &  &      &  &      &a+b&a&1     & &  \\
% b&  &      &  &      &      &  &      &1&  \\
%   &  &      &  &      &      &  &a+b&a&1\\
% a&  &      &  &      &      &  &      &  &a\\
%   &  &      &  &      &      &  &      &  &  
%\end{matrix}&
%\begin{matrix}
% & \\
% &b\\
% &  \\ 
% &  \\
% &  \\
% &  \\
% &  \\
% &  \\
% &b\\	
% &  \\
% 1&a-b\\
% a&1
% \end{matrix}\end{pmatrix}.$$
Then the determinant of $I-uT$ is the determinant of the following matrix

\bigskip

\begin{center}
\begin{tabular}{ c|c c c c c c c c c c c}
  $A_N$   &  & $\beta$&&&&&&&\\
  \hline
  $\alpha$&$1$&             &&&&&&&&\\
               & $a$ &   $1$       &0&$a+b$& &       & & &&\\
               & $b$&             &$1$&       & &      & & &&\\
               &   &             &$a$& $1$    & &$a+b$& & &&\\
               &   &             &$b$&       &$1$&      & & &&\\
               &   &             & &        &$a$&$1$    & &$a+b$&&\\
               &   &             & &        &$b$&      &$1$& &\\
               &   &             & &        &&      &$a$&$1$&&\\
               &   &             & &        &&      &b&&&$\ddots$\\
\end{tabular}.
\end{center}

\vspace{1em}

\noindent where $\alpha$ is the row vector with $\alpha_i=0$ for $i\neq 4N+3$ and $\alpha_{4N+3}=b$ and $\beta$ is the column vector with $\beta_j=0$ for $j\ne 4N+4$ and $\beta_{4N+4}=a+b$. Following the proof of Lemma 4.2 in \cite{DK}, we have the determinant of $I-uT$, which is $\det A_N$ plus $\frac{-ab(a+b)}{1-(a+b)b}\det A_N'$ where $A_N'$ is the submatrix of $A_N$ which is removed $(4N+4)$-th row and $(4N+3)$-th column. Let us compute $\det (A_N')$ and $\det(A_N)$. We use the row operations on $A_N'$ and $A_N$. Both of matrices, we use the same process as follows. 
\begin{itemize}
    \item Step 1. Denote by $[A_N']_i$ the $i$-th row of $A_N'$. Let   $a_k=a\sum_{i=0}^{k-1}b^i(a+b)^i$. For $2\leq i \leq N+1$, we apply the first operation and for $N+2\leq i \leq 2N$, we apply the second operation.
    \begin{equation*}
        \begin{split}
        &[A_N']_{2i}\rightarrow B_i:=[A_N']_{2i}-bB_{i-1} \\
        &[A_N']_{2i}\rightarrow B_i:=[A_N']_{2i}-a_{i-N-1}[A_N']_{2i-1}-(a+b)B_{i-1}.
        \end{split}
    \end{equation*}We change $[A_N']_{4N+2}$ to $[A_N']_{4N+2}-(a+b)B_{2N}.$
    \item Step 2.  Let $C_1=[A_{4N-1}]$. For $2\leq i \leq N$, we apply the first operation and for $N+1\leq i \leq 2N,$ we apply the second operation.
    \begin{equation*}
        \begin{split}
            &[A_{N}']_{4N-2i+1}\rightarrow C_i:=[A_{N}']_{4N-2i+1}-bC_{i-1}\\
            &[A_{N}']_{4N-2i+1}\rightarrow C_i:=[A_{N}']_{4N-2i+1}-aB_{4N-2i}-(a+b)C_{i-1}.
        \end{split}
    \end{equation*}
\end{itemize}   
After changing $[A_N']_{4N+1}$ to $[A_N']_{4N+1}-bC_{2N}$ and $[A_n']_{4N+3}$ to $[A_N']_{4N+3}-aC_{2N}$, %Then $\det(A')$ is the determinant of $3\times 3$ matrix. 
we conclude that $\det(A_N')$ is equal to
\begin{equation}\nonumber
\begin{split}
\det\begin{pmatrix}
1+b^{N+1}(a+b)^N&ab^2\sum_{k=0}^{N-1} b^k(a+b)^k&b\\
a\sum_{k=0}^{N-1}b^k(a+b)^k&1+b^{N+1}(a+b)^N&-1\\
ab^N(a+b)^N&a+a^2b\sum_{k=0}^{N-1}b^k(a+b)^k&-b
 \end{pmatrix}
 \end{split}
 \end{equation}
which reduces to
 \begin{equation}\nonumber
 \begin{split}
 \biggl(1+b^{N+1}(a+b)^N+ab\sum_{k=0}^{N-1}b^k(a+b)^k\biggr)\biggl(a\sum_{k=0}^{N}b^k(a+b)^{k}-b-b^{N+1}(a+b)^{N+1}\biggr).
 \end{split}
 \end{equation}
 Let us perform the same row operations for $A_n$. This process shows that %the determinant of $A_n$ is the determinant of the following $4\times 4$ matrix
 \begin{equation}\begin{split}
 \det(A_N)&=\det\begin{pmatrix}\nonumber
 1+b^{N+1}(a+b)^N&ab^2\sum_{k=0}^{N-1}b^k(a+b)^k&0&b\\
 a\sum_{k=0}^{N-1}b^k(a+b)^k&1+b^{N+1}(a+b)^N&0&-1\\
 ab^N(a+b)^N&a+a^2b
 \sum_{k=0}^{N-1}b^k(a+b)^k&1&-b\\
 0&0&a&1
 \end{pmatrix}\\
 &=(1+b^{N+1}(a+b)^N)^2-a^2b^2\biggl(\sum_{k=0}^{N-1}b^k(a+b)^k\biggr)^2-a\det(A_N')
  \end{split}
 \end{equation}
%For convenience, let
%$$         F_n(u):=1+(q-1)\sum_{k=0}^{n-1}q^ku^{2k+2}-q^nu^{2n+1}$$
%Since 
%\begin{equation}
 %    \begin{split}\nonumber
  %       \det(A_n')=&F_n(u)\biggl(u-(q-1)\sum_{k=0}^nq^ku^{2n+1}-q^{n+1}u^{2n+2}\biggr)\\
   %      \det(A_n)=&F_n(u)\biggl(1-(q-1)\sum_{k=0}^{n-1}q^ku^{2k+2}-q^nu^{2n+1}\biggr)+(q-1)u\det(A_n'),
    % \end{split}
 %\end{equation}
 The determinant of $I-uT$ (which is the reciprocal of $Z_{\mathbf{X}_N}(u)$) is
 \begin{equation}\nonumber
 \begin{split}
 &Z_{\mathbf{X}_N}(u)^{-1}=\det (A_N)+\frac{q(q-1)u^3}{1-qu^2}\det(A_N')
%&=F_n(u)\biggl(1-(q-1)\sum_{k=0}^{n-1}q^ku^{2k+2}-q^nu^{2n+1}\biggr)+\frac{(q-1)u}{1-qu^2}\det(A_n')\\
%&=\frac{F_n(u)}{1-qu^2}\left\{(1-qu^2)\biggl(1-(q-1)\sum_{k=0}^{n-1}q^ku^{2k+2}-q^nu^{2k+1}\biggr)\right. \\
%&\left.\quad\quad\quad\quad\quad\quad +(q-1)u\biggl(u-(q-1)\sum_{k=0}^nq^ku^{2n+1}-q^{n+1}u^{2n+2}\biggr)\right\}\\
\end{split}
\end{equation}
which factors to
\[\frac{(1-u^2)(1-qu)\left\{(1+u)\displaystyle\sum_{k=0}^{N-1}q^ku^{2k}+q^Nu^{2N}\right\}\biggl(1+(q-1)\displaystyle\sum_{k=0}^{N}q^ku^{2k+1
         }-q^nu^{2N+1}\biggr)}{1-qu^2}.\]

The trivial pole $u=1/q$ of $Z_{\mathbf{X}_N}(u)$ appears from the factor $1-qu$ and the second pole (which is negative real) appears at the last factor
\[1+\sum_{k=0}^{N-1}(q-1)q^ku^{2k}+q^N(q-2)u^{2N+1}.\] It converges to $\frac{1}{q}$ as $N\to \infty$ and hence the pole-free regions of $Z_{\mathbf{X}_N}$ converges to $|u|\le \frac{1}{q}$.

\bigskip
In particular, it does not satisfy Ramanujan property of \cite{Mo}. Hence, for each $N\ge 1$, $\pi_1(X_N,\mathcal{G}_N)$ is not a subgroup of $PGL_2(\mathbb{F}_q(\!(t^{-1})\!))$. Additionally, we have
\bigskip

\begin{coro} For any given $\epsilon>0$, there is $N\ge 1$ such that
\[|N_m(\mathbf{X}_N)-q^m|\notin O((q-\epsilon)^m)\] as $m\to\infty$.
\end{coro}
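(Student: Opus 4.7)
My plan is to read off the exponential growth rate of $N_m(\mathbf{X}_N)-q^m$ from the pole structure of $Z_{\mathbf{X}_N}(u)$ established in the proof of Theorem~\ref{thm:pole-free}. Since the argument of Lemma~\ref{lem:centrigid} applies verbatim to $\mathbf{X}_N$, the group $\Gamma_N:=\pi_1(X_N,\mathcal{G}_N)$ is centrally rigid with $c_{\Gamma_N}=1$; hence $Z_{\mathbf{X}_N}(u)=Z_{\mathbf{A}_N}(u)$ and taking the logarithmic derivative of $Z_{\mathbf{X}_N}(u)=\exp\bigl(\sum_{m\ge1}N_m(\mathbf{X}_N)u^m/m\bigr)$ gives
\[
\frac{uZ'_{\mathbf{X}_N}(u)}{Z_{\mathbf{X}_N}(u)}=\sum_{m=1}^{\infty}N_m(\mathbf{X}_N)u^m.
\]

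The explicit factorization of $Z_{\mathbf{X}_N}(u)^{-1}$ exhibits $u=1/q$ as a simple pole of $Z_{\mathbf{X}_N}(u)$; a direct substitution shows none of the factors $1-u^2$, $P_N(u)$, $Q_N(u)$, $1-qu^2$ vanishes at $u=1/q$. Consequently the principal part of $uZ'_{\mathbf{X}_N}/Z_{\mathbf{X}_N}$ at $u=1/q$ coincides with that of $qu/(1-qu)=\sum_{m\ge1}q^mu^m$, so the function
\[
G_N(u):=\frac{uZ'_{\mathbf{X}_N}(u)}{Z_{\mathbf{X}_N}(u)}-\frac{qu}{1-qu}=\sum_{m=1}^{\infty}(N_m(\mathbf{X}_N)-q^m)u^m
\]
is analytic at $u=1/q$. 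On the other hand, $G_N(u)$ retains a pole at the negative real pole $u_N$ of $Z_{\mathbf{X}_N}(u)$ produced in the proof of Theorem~\ref{thm:pole-free}: for $N$ large $|u_N|$ is close to $1/q<1/\sqrt q$, so $u_N$ is distinct from the zeros $\pm1/\sqrt q$ of $Z_{\mathbf{X}_N}(u)$ and no cancellation can occur.

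Therefore the radius of convergence of the power series defining $G_N(u)$ is at most $|u_N|$, and the Cauchy--Hadamard formula yields
\[
\limsup_{m\to\infty}|N_m(\mathbf{X}_N)-q^m|^{1/m}\ge\frac{1}{|u_N|}.
\]
Given $\epsilon>0$, Theorem~\ref{thm:pole-free} allows us to choose $N$ large enough that $1/|u_N|>q-\epsilon$. Along a subsequence $m_k\to\infty$ realizing the $\limsup$ we then obtain
\[
\frac{|N_{m_k}(\mathbf{X}_N)-q^{m_k}|}{(q-\epsilon)^{m_k}}=\left(\frac{|N_{m_k}(\mathbf{X}_N)-q^{m_k}|^{1/m_k}}{q-\epsilon}\right)^{m_k}\longrightarrow\infty,
\]
which precludes $|N_m(\mathbf{X}_N)-q^m|\in O((q-\epsilon)^m)$. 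The only delicate point in the whole argument is verifying that $u_N$ is genuinely a singularity of $G_N(u)$, i.e.\ that it is not cancelled by a coinciding zero of $Z_{\mathbf{X}_N}(u)$ or by an accidental zero of $Z'_{\mathbf{X}_N}$; as explained, for large $N$ this follows immediately from the fact that the only zeros of $Z_{\mathbf{X}_N}(u)$ sit at $\pm1/\sqrt q$, whose moduli are bounded away from $|u_N|\to 1/q$.
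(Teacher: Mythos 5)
Your argument is correct and follows essentially the same route as the paper: both extract the growth of $N_m(\mathbf{X}_N)-q^m$ from the logarithmic derivative of $Z_{\mathbf{X}_N}$ together with the negative real pole $u_N$ whose modulus tends to $1/q$. The only difference is in the last step --- the paper factors $Z_{\mathbf{X}_N}$ completely and writes $N_m(\mathbf{X}_N)-q^m$ as an explicit sum of $m$-th powers of the reciprocal poles $\alpha_i,\beta_j$, whereas you subtract the principal part at $u=1/q$ and invoke Cauchy--Hadamard, which cleanly sidesteps any worry about cancellation among terms of equal modulus.
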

\begin{proof}
We may write
\[Z_{\mathbf{X}_N}(u)=\frac{(1-\sqrt{q}u)(1+\sqrt{q}u)}{(1+u)(1-u)(1-qu)\prod_{i=1}^{2N+1}(1-\alpha_iu)\prod_{j=1}^{2N+1}(1-\beta_ju)}\] with $|\alpha_1|\le\cdots\le |\alpha_{2N+1}|$ and $|\beta_1|\le \cdots\le |\beta_{2N+1}|$.
Since $c_\Gamma=1$, it follows from Equation~(\ref{eqn:Rm}) that
\[u\frac{Z_{\mathbf{X}_N}'(u)}{Z_{\mathbf{X}_N}(u)}=\sum_{m=1}^{\infty}N_m({\mathbf{X}_N})u^m.\]
Then, \begin{align*}
u\frac{d}{du}(\log Z_{\mathbf{X}_N}(u))=\frac{-2qu}{1-qu^2}+\frac{2u}{1-u^2}+\frac{q}{1-qu}+\sum_{i=1}^{2N+1}\frac{\alpha_i}{1-\alpha_iu}+\sum_{j=1}^{2N+1}\frac{\beta_j}{1-\beta_ju}
\end{align*}which yields $N_m(\mathbf{X}_{N})=q^m+\sum_{i=1}^{2N+1}(\alpha_i^m+\beta_i^m)-q^{\frac{m}{2}}-(-1)^mq^{\frac{m}{2}}+1+(-1)^m$. Since $|\alpha_i|,|\beta_i|<q$ and $|\beta_{2N+1}-q|\to 0$ as $N\to\infty$, we get the statement.
\end{proof}

% -----------------------
% --- Calculation ---
% -----------------------

%\appendix

%\section{}\label{sec:appendix}

\bigskip

\end{document}